\documentclass[12pt,reqno]{amsart}
\usepackage[a4paper]{geometry}
\usepackage[utf8]{inputenc}
\usepackage{versions}
\usepackage{stmaryrd}
\usepackage{xspace}
\usepackage{color}
\usepackage{dsfont}

\allowdisplaybreaks

\numberwithin{equation}{section}

%Pour faire afficher les labels
%\usepackage{showkeys}

%----------------------------------

%-----------------------------------
\newtheorem{theo}{Theorem}[section]
\newtheorem{lemme}[theo]{Lemma}
\newtheorem{prop}[theo]{Proposition}
\newtheorem{corol}[theo]{Corollary}

\newtheorem{hypothesis}[theo]{Assumption}

\newcommand{\ind}{\ensuremath{\mathds{1}}\xspace}

\newcommand{\E}{\ensuremath{\mathbb{E}}\xspace}

\renewcommand{\H}{\ensuremath{\mathbb{H}}\xspace}

\newcommand{\N}{\ensuremath{\mathbb{N}}\xspace}

\newcommand{\R}{\ensuremath{\mathbb{R}}\xspace}
\renewcommand{\S}{\ensuremath{\mathbb{S}}\xspace}

\newcommand{\BB}{\ensuremath{\mathcal{B}}\xspace}
\newcommand{\CC}{\ensuremath{\mathcal{C}}\xspace}
\newcommand{\DD}{\ensuremath{\mathcal{D}}\xspace}

\newcommand{\HH}{\ensuremath{\mathcal{H}}\xspace}

\newcommand{\LL}{\ensuremath{\mathcal{L}}\xspace}
\newcommand{\MM}{\ensuremath{\mathcal{M}}\xspace}

\DeclareMathOperator{\GL}{GL}

\def\Var{{\mathrm{{\rm Var}}}}

\def\Cov{{\mathrm{{\rm Cov}}}}

\def\and{{\mathrm{{\rm and}}}}

\newcommand{\BP}{\ensuremath{\mathbf{P}}\xspace}
\newcommand{\BQ}{\ensuremath{\mathbf{Q}}\xspace}

\def\mathpal#1{\mathop{\mathchoice{\text{\rm #1}}%
   {\text{\rm #1}}{\text{\rm #1}}%
   {\text{\rm #1}}}\nolimits}

\def\End{\mathpal{End}}

\def\Ric{\mathpal{Ric}}

\def\id{\mathpal{id}}

\def\vol{\mathpal{vol}}

%\newcommand{\Label}[1]{\reversemarginpar{\mbox{ }\ \fbox{\normalsize #1}}\label{#1}}

%-----------------------------------------------------
\begin{document}

\title[]{Intertwining relations for diffusions in manifolds and applications to functional inequalities}
%\date{\today\ \emph{ File: }\jobname.tex}

\author[B. Huguet]{Baptiste Huguet} 
\address{Institut de Mathématiques de Bordeaux, UMR CNRS 5251, Université de Bordeaux, France}
\email{baptiste.huguet@math.u-bordeaux.fr}
\urladdr{https://www.math.u-bordeaux.fr/~bhuguet/}
\keywords{intertwining; diffusion on manifold; Brascamp-Lieb types inequalities; spectral gap}

%%%%%%%%%%%%%%%%%%%%%%%%%%%%%%%%%%%%%%%%%%%%%%%%%%%%%%%%%%%%%%%%%%%%%%%%%%
%
%  Abstract, Keywords, AMS classification
%
%%%%%%%%%%%%%%%%%%%%%%%%%%%%%%%%%%%%%%%%%%%%%%%%%%%%%%%%%%%%%%%%%%%%%%%%%%

\begin{abstract}\noindent
We construct a generalisation of Bakry-\'Emery criterion to prove twisted intertwining relations for Markov semigroups. These relations are applied to Brascamp-Lieb type inequalities and spectral gap results. It extends the method of \cite{ABJ} to Riemannian manifolds and to a wider class of twist. These results are illustrated with several examples. 

\end{abstract}

\maketitle
\tableofcontents

%%%%%%%%%%%%%%%%%%%%%%%%%%%%%%%%%%%%%%%%%%%%%%%%%%%%%%%%%%%%%%%%%%%%%%%%%%%%
%
%  Actual Body of the Paper
%
%%%%%%%%%%%%%%%%%%%%%%%%%%%%%%%%%%%%%%%%%%%%%%%%%%%%%%%%%%%%%%%%%%%%%%%%%%%%
%%%%%%%%%%%%%%%%%%%%%%%%%%%%%%%%%%%%%%%%%%%%%%%%%%%%%%%%%%%%%%%%%%%%%%%%%%%%
\section{Introduction}\label{Section1}
\setcounter{equation}0
%%%%%%%%%%%%%%%%%%%%%%%%%%%%%%%%%%%%%%%%%%%%%%%%%%%%%%%%%%%%%%%%%%%%%%%%%%%%
The aim of this paper is to extend our understanding of intertwining relations between Markov semigroups in the setting of Riemannian manifolds and its applications in functional inequalities but also the underlying role of stochastic processes as the deformed parallel translation. These relations have been first investigated in the discrete case for birth-death processes in \cite{ChJ} and in the one dimensional case in~\cite{BJ}. The case of reversible ergodic diffusions in the Euclidean space $\R^n$ is treated in ~\cite{ABJ}. In this paper, we also investigate the case of reversible and ergodic diffusions, with generator 
\begin{align*}
Lf = \Delta f-\langle\nabla V,\nabla f\rangle
\end{align*}
where $V$ is a smooth potential on a Riemannian manifold $M$. Such a diffusion admits a unique invariant measure, $\mu$, absolutely continuous with respect to the Riemannian measure, with density proportional to $e^{-V}$. 

We are looking for intertwining relations by differentiation : the goal is to rewrite the derivative of a smooth Markov semigroup acting on functions as a Markov semigroup acting on differential forms. Unlike in the one-dimensional case, where functions and their derivatives have the same nature, in a manifold setting, the two intertwined semigroups act on different spaces. Actually, we look at semigroups on $1$-forms which restrictions on differential forms satisfy an intertwining relation. As we want to stress on the action on $1$-forms, we do not look into Bismut type formulae (see \cite{EL1} or \cite{EL2}).

At the level of operators, the intertwining relation occurs without further assumptions. The generator $L$ is intertwined with a weighted Laplacian acting on~$1$-forms, $L^W$, unitary equivalent to the Witten Laplacian. A large study of this operator can be found in the work of Helffer, with application to correlation decay in spin systems (see \cite{Hel}). At the level of stochastic processes, $L^W$ is the generator on $1$-forms of a diffusion on the tangent bundle: the deformed parallel translation (or geodesic transport in \cite{Mey}). In \cite{ACT}, this process appears naturally as a spacial derivative of a flow of the diffusion with generator $L$. These intertwining relations at the level of processes and generator suggest an intertwining relation at the level of the semigroups and a stochastic representation of the intertwined semigroup. However, at the level of semigroups, intertwining relations are not so obvious: more assumptions are required. In the Euclidean space, the classical assumption is the strong convexity of the potential~$V$ or, in other way to say it, the positiveness of its Hessian. A classical generalisation of this condition on Riemannian manifolds is the positiveness of an operator depending on the Hessian and the Ricci curvature, known as the Bakry-\'Emery criterion (see \cite{Bak}). The stochastic approach of intertwining relation is an important part of Li's PhD thesis \cite{LiXM1} and works (\cite{LiXM2}, \cite{LiXM3}) and the books of Elworthy, Le Jan and Li \cite{ELL} and \cite{ELL1}. In this paper, we extend a strategy presented in~\cite{ABJ} on~$\R^n$, to manifolds, so as to obtain intertwined semigroups even if the Bakry-\'Emery criterion is not fulfilled. We consider twisted gradients, or equivalently, twisted metrics on the tangent space by a section of~$\GL(TM)$. This operation does not change the stochastic diffusion on $M$ but creates new ones on the tangent space, with associated generators and semigroups. Under assumptions on these twists, which replace the Bakry-Émery condition, we can obtain intertwining relations at the level of semigroups. A consequence of these intertwinings is a family of Brascamp-Lieb type inequalities, extending the classical case satisfied under the strict convexity assumption of the potential. 

Let us summarize the content of this paper. In Section \ref{Section2}, we recall basic facts about semigroup, deformed parallel translation and the classical commutation at the level of the generators. The semigroup considered is stochastically defined on bounded continuous functions. In Section \ref{Section3}, we gives a stochastic proof of intertwining relation for the $\CC^0$ semigroups under the classical Bakry-\'Emery condition. It is applied to an asymmetric Brascamp-Lieb inequality and concentration property. In Section \ref{Section4}, we introduce twistings, associated semigroups and their generators. The goal of Section \ref{Section5} is to find conditions for these generators to be decomposable as a sum of a symmetric positive second order generator and a zero order potential. In Sections \ref{Section6} and \ref{Section7}, we obtain conditions to have intertwining relations for the $L^2$ semigroups on $1$-forms. Theorem \ref{P6.2} is a generalization of Theorem 2.2 in \cite{ABJ}, in a manifold setting, with the same kind of assumptions: conditions of symmetry and positiveness of the second order operator and bound on the potential. Theorem \ref{P7.1} extends this result when the second order operator is not symmetric non-negative. We achieve to release all assumptions over the second order operator by a stronger bound on the potential. These intertwinings are applied in Theorems~\ref{P6.3} and~\ref{P7.3} to obtain generalized Brascamp-Lieb and Poincaré inequalities. We finish, in Section \ref{Section8} with several illustrations of measures which fail Bakry-\'Emery criterion in different ways and for which our method brings bounds on the spectral gap. It improves the previously known lower bound for the classical Cauchy measure.  
%%%%%%%%%%%%%%%%%%%%%%%%%%%%%%%%%%%%%%%%%%%%%%%%%%%%%%%%%%%%%%%%%%%%%%%%%%%%
\section{Deformed parallel transport and Commutation}\label{Section2}
\setcounter{equation}0
%%%%%%%%%%%%%%%%%%%%%%%%%%%%%%%%%%%%%%%%%%%%%%%%%%%%%%%%%%%%%%%%%%%%%%%%%%%%
On a connected complete Riemannian manifold $(M,\langle\cdot,\cdot\rangle)$, endowed with its Levi-Civita connection $\nabla$, let $\CC^{\infty}(M)$ be the space of smooth real-valued functions and~$\CC^{\infty}_c(M)$ its subspace of compactly supported functions. In this paper, we consider the second order diffusion operator defined on $\CC^{\infty}(M)$ by
\begin{equation}\label  {E2.1}
Lf=\Delta f -\langle\nabla V, \nabla f\rangle,
\end{equation}
where $V$ is a smooth potential. We denote by $\mu$ the measure on $M $ with density~$e^{-V}$. On $\CC^\infty_c(M)$, the operator $L$ is symmetric with respect to~$\mu$, that is for all~$f,g \in \CC^\infty_c(M)$,
\begin{equation}
\int_M{Lf g\, d\mu} = -\int_M{\langle df, dg\rangle\, d\mu} = \int_M{f Lg\, d\mu}.
\label{E2.9}
\end{equation}

Let $X_t^x$ be a diffusion process with generator $L$, started at $x\in M$. Such a process exists and is unique in law, up to an explosion time $\tau_x$. We define a family~$(\BP_t)_{t\geq0}$ of operators on the space of bounded continuous functions by:   
\begin{equation}
\BP_tf(x) = \E\left[f\left(X_t^x\right)\ind_{t<\tau_x}\right].
\label{E2.8}
\end{equation}
The Markov property for diffusion processes implies that $(\BP_t)_{t\geq0}$ is a semigroup and for all $f\in\CC^{\infty}_c(M)$, we have 
\begin{equation}\label{E2.10}
\partial_t \BP_tf(x) = L\BP_tf(x) = \BP_tLf(x).
\end{equation}

Above the process $X_t^x$, one can construct the parallel translation $\sslash_t$. In a geometric point of view, it is an isometric isomorphism from $T_xM$ to $T_{X_t^x}M$, as the $\CC^1$ parallel translation, but in a stochastic point of view, it can be seen as a diffusion on the tangent bundle. Its generator on $1$-forms is given by :
\begin{equation}\label  {E2.6}
L^\sslash\alpha = \Delta^h \alpha-\nabla_{\nabla V}\alpha,\, \forall\alpha\in \Gamma(T^*M)
\end{equation}
where $\Delta^h$ is the horizontal Laplacian on $1$-forms. According to the Weitzenböck formula, for all $1$-form $\alpha$ and $w\in TM$, we have:
\begin{equation}\label  {E2.6.1}
\langle \Delta^h\alpha,w\rangle=\langle \square \alpha,w\rangle+\langle \alpha,\Ric(w)\rangle ,
\end{equation}
with $\square$ being the Hodge-de Rham Laplacian. For more details on the construction of this object, one can look at \cite{Hsu}.

The parallel translation is the first step to define a more relevant translation, in terms that will be explained bellow : the deformed parallel translation $W_t$. It is the linear map $T_xM\to T_{X_t^x}M$ determined by the differential equation:
\begin{equation}\label  {E2.2}
\left\{\begin{aligned} D_t W_tv &=  - \MM^*W_tv dt\\ W_0 &= \id_{T_xM}\\ \end{aligned}\right. ,
\end{equation}
where 
\begin{equation}
D_tW_tv=\sslash_t d\left(\sslash_t^{-1}W_tv\right)
\label{E2.2.2}
\end{equation}  
stands for the covariant derivative of~$W_tv$ and the operator $\MM^*$ is a section of ~$\End(TM)$ defined by \begin{equation}
\MM^* w=\nabla_w\nabla V+\Ric(w),\, \forall w\in TM .
\label{E2.4.2}
\end{equation}
Its adjoint operator, a section of $\End(T^*M)$ is denoted $\MM$. Remarks that both translations ~$\sslash_t$ and $W_t$ depend on the initial point $x$ but we avoid any reference to it when it is obvious. As an alternative definition or a major property,  Theorem $2.1$ in \cite{ACT} shows that for all $x\in M$ and $v\in T_x M$, $W_tv$ is the spatial derivative of a flow of the diffusion with generator $L$, obtained from $X_t^x$ by parallel coupling. In some way, the processes $X_t^x$ and $W_t$ are intertwined. As the parallel translation, the deformed parallel translation can be seen as a diffusion on the tangent bundle.
\begin{prop}
\label{P2.1}
The generator on $1$-forms of the deformed parallel translation is given by :
\begin{equation}
L^W\alpha =  L^\sslash\alpha  - \MM\alpha,\, \forall\alpha\in \Gamma(T^*M).
\end{equation}  
\end{prop}

\begin{proof}
This result is just an application of Ito formula to $1$-form (see \cite{LiXM1}). Let us detail a bit. Let $\alpha$ be a $1$-form and $v\in T_xM$, we have :
\begin{align*}
d\langle \alpha,W_tv\rangle 
&= d\langle \alpha \sslash_t, \sslash_t^{-1} W_tv\rangle\\
&= \left\langle d\left(\alpha\sslash_t\right),\sslash_t^{-1}W_tv\right\rangle + \left\langle\alpha\sslash_t, d\left(\sslash_t^{-1}W_tv\right)\right\rangle + \left\langle d(\alpha\sslash_t), d(\sslash_t^{-1}W_t)\right\rangle\\ 
&= \langle D_t\alpha,W_tv\rangle + \langle\alpha, D_tW_tv\rangle + \langle D_t\alpha, D_t W_t\rangle,\\
\end{align*}
where $D_t\alpha=d\left(\alpha\sslash_t\right)\sslash_t^{-1}$ stands for the covariant differential of~$\alpha$ along $X_t^x$. 
As parallel translation is a diffusion with generator $L^{\sslash}$, we have :
$$\langle D_t\alpha,W_tv\rangle \stackrel{(m)}{=} \langle L^{\sslash}\alpha, W_tv\rangle dt,$$ 
where $\stackrel{(m)}{=}$ means "up to a local martingale". As $W_t(x)$ satisfies equation \eqref{E2.2}, we obtain the second term and the quadratic term~$\langle D_t\alpha, D_t W_t\rangle$ vanishes as $D_tW_t$ has finite variation.
\end{proof}

For now, the operator $L^W$ has been defined only on smooth $1$-forms. We extend it in a~$L^2$-sense.  Let $L^2(\mu)$ be the space of measurable $1$-forms $\alpha$ such that 
\begin{equation*}
\int_M{|\alpha|^2\, d\mu} <+\infty .
\end{equation*}
Li proved in \cite{LiXM1} the following result.
\begin{theo}
The operator $L^W$ is essentially self-adjoint on $L^2(\mu)$.
\end{theo}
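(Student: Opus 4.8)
The plan is to realize $L^W$ as a lower-order perturbation of a manifestly essentially self-adjoint operator and then invoke a Kato--Rellich / Gaffney-type argument. From \eqref{E2.4} we have $L^W\alpha = \square\alpha - \nabla_U\alpha - R^\sharp\alpha$, where $U=\nabla V$ and $R^\sharp$ is the zero-order operator $w\mapsto \nabla_w U$ on $1$-forms. First I would rewrite this in a form adapted to the measure $\mu=e^{-V}\,d\vol$: using \eqref{E2.6.1}, $\square = \Delta^h - \Ric^\sharp$, one can absorb the drift term $-\nabla_U = -\nabla_{\nabla V}$ together with part of the curvature into the \emph{Witten Laplacian on $1$-forms}, i.e.\ the Bochner-type operator $\Delta^h_V := \Delta^h - \nabla_{\nabla V}$, which is the natural self-adjoint Laplacian on $L^2(\Lambda^1 T^*M,\mu)$; it satisfies $\int \langle \Delta^h_V\alpha,\beta\rangle\,d\mu = -\int \langle \nabla\alpha,\nabla\beta\rangle\,d\mu$ on compactly supported smooth forms, so it is symmetric and nonpositive there. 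The remaining terms are a bounded-below (but possibly unbounded-above) symmetric zero-order multiplication operator coming from $\Ric$ and $\Hess V$. So $L^W = \Delta^h_V - Q$ with $Q$ a symmetric section of $\End(T^*M)$.

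The core step is then essential self-adjointness of the Schr\"odinger-type operator $\Delta^h_V - Q$ on $\CC^\infty_c(\Lambda^1)$ in $L^2(\mu)$. I would run the standard scheme: (i) show $\CC^\infty_c$-forms are a core for $\Delta^h_V$ itself — this is the Riemannian analogue of essential self-adjointness of the (weighted) Bochner Laplacian on a \emph{complete} manifold, which follows from Gaffney's theorem / the existence of cutoff functions $\chi_n$ with $\chi_n\to 1$, $|\nabla\chi_n|\to 0$ uniformly, combined with the Bochner formula; completeness of $(M,g)$ is exactly what is assumed. (ii) Handle the potential $Q$. If $Q$ were bounded this is immediate by Kato--Rellich. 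In general $Q$ is only bounded below, so I would instead use the Sears/Wienholtz-type commutator (or quadratic-form) argument: form the symmetric operator $H=\Delta^h_V - Q$ on $\CC^\infty_c$, check semiboundedness of the associated quadratic form, and show that any $\alpha\in L^2(\mu)$ with $H^*\alpha = \pm i\alpha$ must vanish by testing against $\chi_n^2\alpha$ and using that the first-order error terms generated by the cutoff are controlled by the (finite) energy $\int \langle\nabla\alpha,\nabla\alpha\rangle\,d\mu$ that the eigenvalue equation forces to be finite. This is the classical ``completeness $\Rightarrow$ essential self-adjointness'' mechanism (cf.\ Strichartz, or the treatment in \cite{Hsu} for the scalar Witten Laplacian), applied here to $1$-forms.

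The main obstacle I anticipate is precisely that $Q$ — built from $\Hess V$ and $\Ric$ — need not be bounded, and in general need not even be bounded below without \emph{some} hypothesis; the clean statement as written presumably relies implicitly on a lower bound of the form $Q\ge -cg$ (a Bakry--\'Emery-type curvature-dimension lower bound), or is meant under standing regularity assumptions that guarantee it. Under such a semiboundedness assumption the argument is routine; without it one genuinely cannot expect the conclusion. So in writing the proof I would (a) state the semiboundedness of the form $\alpha\mapsto \int(|\nabla\alpha|^2 + \langle Q\alpha,\alpha\rangle)\,d\mu$ as the hypothesis actually used, (b) invoke completeness to get the cutoff functions, and (c) give the two-line commutator/cutoff computation showing $\ker(H^*\mp i)=0$. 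The only genuinely computational piece is bounding the cutoff commutator $[\Delta^h_V,\chi_n]$ against $|\nabla\chi_n|$ and $|\nabla^2\chi_n|$, which is standard and which I would not expand in detail.
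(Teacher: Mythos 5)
Your decomposition $L^W=\Delta^h_V-Q$, with $\Delta^h_V=\Delta^h-\nabla_{\nabla V}$ and a zero-order symmetric potential $Q$ built from $\Ric$ and $\Hess V$, is where the argument goes off track. As you yourself note, $Q$ is in general neither bounded nor bounded below, and the Kato--Rellich / Sears--Wienholtz scheme you propose only runs under a semiboundedness hypothesis $Q\ge -c\,g$; you then conclude that the theorem ``presumably relies implicitly'' on such a Bakry--\'Emery-type bound and that ``without it one genuinely cannot expect the conclusion.'' That last claim is false, and avoiding any such hypothesis is precisely the point of the paper's proof. The paper shows (equation \eqref{E2.11}) that on smooth compactly supported $1$-forms $L^W=-(d\delta_V+\delta_V d)$, where $\delta_V$ is the adjoint of $d$ for the $L^2(\mu)$ scalar product; that is, $-L^W$ is a weighted Hodge--de Rham (Witten-type) Laplacian. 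Its quadratic form is $\|d\alpha\|^2_{L^2(\mu)}+\|\delta_V\alpha\|^2_{L^2(\mu)}\ge 0$, so non-negativity --- hence semiboundedness --- is automatic, with no condition on $\Ric+\Hess V$: in the Weitzenb\"ock picture the ``potential'' is part of a square and cannot destroy positivity, even though it is unbounded below as a multiplication operator. Essential self-adjointness then follows by adapting Helffer's argument \cite{Hel}, i.e.\ a Gaffney/Chernoff-type cutoff argument on the complete manifold $M$ for operators of the factorized form $d\delta_V+\delta_V d$; this is why the paper can, immediately after the theorem, define the $L^2(\mu)$ semi-group ``without any assumptions.''

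So the gap is twofold: (i) as written, your proof establishes only a strictly weaker statement, conditional on a lower bound for $\Ric^{\sharp}+\Hess V$ that is not among the standing hypotheses (completeness of $M$ and smoothness of $V$); (ii) the assertion that the conclusion genuinely fails without such a bound is incorrect --- the missing idea is the factorization $-L^W=d\delta_V+\delta_V d$, which supplies the required non-negativity for free. The completeness/cutoff machinery you describe ($\chi_n\to 1$, $|\nabla\chi_n|\to 0$, testing $\ker(H^*\mp i)$ against $\chi_n^2\alpha$) is indeed the right second half of the argument, but it must be run on the Hodge-factorized operator, using that the eigenvalue equation forces $d\alpha$ and $\delta_V\alpha$ to lie in $L^2(\mu)$, not on the Bochner-Laplacian-minus-potential form, where the uncontrolled negative part of $Q$ blocks the estimate.
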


We give some ideas of the proof.
\begin{proof}
We denote by $\delta_V$ the adjoint of the exterior derivative on forms for the scalar product on $L^2(\mu)$. Some calculation shows that, for all smooth compactly supported $1$-forms $\alpha$, we have:
\begin{equation}
L^W\alpha=-(d\delta_V + \delta_Vd)\alpha
\label{E2.11}
\end{equation}
Then $L^W$ is unitary equivalent to a Witten Laplacian and so is essentially self-adjoint (see \cite{Hel} for more details).
\end{proof}

Then, without any assumptions, the deformed parallel translation defines a semigroup $(\BQ_t)_{t\geq0}$ on $L^2(\mu)$. We will see in Section \ref{Section3} that under suitable conditions on the potential $\MM$, it also generate a $\CC^0$ semigroup, on bounded continuous $1$-forms with a stochastic representation as \eqref{E2.8}. Remark that a continuous bounded $1$-form is not bounded as an element of $\CC^0(TM)$ and this is the major obstruction to the definition of a $\CC^0$ semigroup.

The generator of the deformed parallel translation satisfies a commutation formula. For all $f\in\CC^{\infty}(M)$, one has:
\begin{equation}\label  {E2.5}
dLf=L^Wdf.
\end{equation}
This is an intertwining relation at the level of generators. This commutation formula on generators and the intertwining relation at the level of stochastic processes suggest an intertwining relation between the semigroups $\BP$ and $\BQ$. The well-known following calculation is a motivation to obtain this kind of relation. We assume that $\mu$ is a probability measure. Then, it makes sense to look forward bounds on it variance. We also assume that the diffusion is ergodic i.e for all~$f\in\CC_c^\infty(M)$ :
\begin{equation}
\lim_{t\to+\infty}\BP_t f=\mu(f)\,\text{a.s} . 
\end{equation}
Remark that it is a weaker notion of ergodicity than the usual $L^2$ one.

\begin{prop}\label{P2.3}
For all $f,g\in \CC_c^\infty(M)$ we have the following covariance representation:
$$\Cov_\mu(f,g) =\int_0^{+\infty}\left(\int_M \langle df, d \BP_tg \rangle\, d\mu \right)\,dt.$$
\end{prop}

\begin{proof}
Using the ergodicity assumption and the relation \eqref{E2.10}, for all $f,g\in\CC^\infty_c(M)$ we have :
\begin{align*}
\Cov_\mu(f,g)
&=\int_M f(g-\mu(g))\, d\mu \\
&=\lim_{t\to+\infty}\int_M	f(g-\BP_t(g))\, d\mu\\
&=-\int_M \int_0^{+\infty}f L\BP_tg\, dt\, d\mu \\
&=\int_0^{+\infty}\left(\int_M \langle df, d \BP_tg \rangle\, d\mu \right)\,dt \\
\end{align*}
\end{proof}
This covariance representation enlightens the necessity of understanding the differential $d\BP_t$. 

%%%%%%%%%%%%%%%%%%%%%%%%%%%%%%%%%%%%%%%%%%%%%%%%%%%%%%%%%%%%%%%%%%%%%%%%%%%%
\section{A covariance inequality}\label{Section3}
\setcounter{equation}0
%%%%%%%%%%%%%%%%%%%%%%%%%%%%%%%%%%%%%%%%%%%%%%%%%%%%%%%%%%%%%%%%%%%%%%%%%%%%
The main goal of this section is to motivate the use of intertwining relation for functional inequalities and to enlighten the role of Bakry-\'Emery criterion.  As we are looking for a generalisation of this criterion, we want to see how it works. This section is also the opportunity to give a proof of the intertwining relation for the $\CC^0$ semigroups, using only the stochastic tools presented in Section \ref{Section2}. We obtain an asymmetric Brascamp-Lieb inequality in the spirit of Ledoux (see \cite{Led} or \cite{CC-EL}) and a gaussian concentration result. 

Firstly, we have to find a condition so as to properly define the semigroup. As an endomorphism of $T^*_xM$, the operator $\MM(x)$ defined in \eqref{E2.4.2}, is symmetric with respect to the metric. We denote by $\rho(x)$ the smallest eigenvalue of $\MM(x)$ and by $\rho$, its infimum over $M$:
\begin{equation}
\rho=\inf_{x\in M}\{\text{smallest eigenvalue of}\, \MM(x)\}
\end{equation}

The assumption of this section is the Bakry-Émery criterion (also known as the~${CD(\rho, \infty)}$ condition in \cite{BJL}).

\begin{hypothesis}[Bakry-Émery criterion]
The operator $\MM$ is uniformly bounded from below, i.e $\rho >-\infty$.
\end{hypothesis}

It is a sufficient condition for hypercontractivity of the diffusion and allows to prove Poincaré or Log-Sobolev inequalities (see \cite{BE}). Bakry proves in \cite{Bak} that, under this criterion, the diffusion $X$ does not explode (i.e for all $x\in M$, $\tau_x=+\infty$ almost surely). It is not a necessary condition. The following intertwining results are proved in \cite{LiXM1} with a finite moment criterion, weaker but less handy, and for other flow (see also the concept of $p$-completeness in \cite{LiXM2}), or in \cite{EL1} with finer bounds. The following result is well known.

\begin{prop}\label{P3.1}
Under the Bakry-Émery criterion, the semigroup $\BQ$ has the stochastic representation : for all bounded $1$-form $\alpha$, for all $x\in M$, for all $v\in T_v M$,
\begin{equation}\label{E3.1}
\langle\BQ_t\alpha,v\rangle = \E\left[\langle\alpha,W_tv\rangle\right],
\end{equation} 
and we have: for all $1$-form $\alpha$, for all $x\in M$,
\begin{equation*}
\|\BQ_t\alpha\|_\infty\leq e^{-\rho t}\|\alpha\|_\infty .
\end{equation*}
\end{prop}

\begin{proof}
The heart of the proof is to show that under this criterion, the deformed parallel translation is bounded. For all $x\in M$ and all $v\in T_xM$, one has 
\begin{align*}
d |W_tv|^2
&= 2\left\langle W_tv, D_tW_tv\right\rangle\\
&= -2\left\langle W_tv, \MM^* W_tv\right\rangle dt\\
&\leq -2\rho |W_tv|^2 dt .
\end{align*} 

By Grönwall lemma, this yields 
\begin{equation}\label{E3.2}
|W_tv|\leq e^{-\rho t}|v|,\, a.s.
\end{equation}
This shows that the stochastic representation \eqref{E3.1} is well-defined and concludes the proof. 
\end{proof}

\begin{prop}\label{P3.2}
Under the Bakry-Émery criterion, the semigroups $\BP$ and $\BQ$ are intertwined by the derivative of functions: for all $f\in\CC^{\infty}_c(M)$,
\begin{equation}
d \BP_tf = \BQ_t df.
\end{equation}
\end{prop}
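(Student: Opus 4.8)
The plan is to interpolate between the two semi-groups. Fix $f\in\CC^{\infty}_c(M)$ and $t>0$, and set for $s\in[0,t]$
\begin{equation*}
\Phi(s)=\BQ_{t-s}\,d\BP_s f .
\end{equation*}
Then $\Phi(0)=\BQ_t\,df$ and $\Phi(t)=d\BP_t f$, so it suffices to show that $\Phi$ is constant on $[0,t]$. Formally, using $\partial_s\BP_s f=L\BP_s f=\BP_s(Lf)$, the differentiation rule for the semi-group $\BQ$ (generator $L^W$), and then the generator-level commutation \eqref{E2.5} applied to the smooth function $\BP_s f$, one gets
\begin{equation*}
\Phi'(s)=-\BQ_{t-s}\big(L^W d\BP_s f\big)+\BQ_{t-s}\big(dL\BP_s f\big)=-\BQ_{t-s}\big(L^W d\BP_s f\big)+\BQ_{t-s}\big(L^W d\BP_s f\big)=0 ,
\end{equation*}
whence $\Phi(0)=\Phi(t)$; equivalently $\BQ_t\,df-d\BP_t f=-\int_0^t\Phi'(s)\,ds=0$.

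Before this can be run, several preliminary points must be settled, and this is where the Bakry-Émery criterion enters. By \cite{Bak} the diffusion $X$ does not explode, so $\BP_s f(x)=\E[f(X_s(x))]$, and by Proposition~\ref{P3.1} the semi-group $\BQ$ on bounded continuous $1$-forms is given by \eqref{E2.10} with $|\BQ_r\alpha_x|\le e^{-\rho r}\|\alpha\|_\infty$. Next, $\BP_s f$ solves the heat equation $\partial_s u=Lu$ with datum $f\in\CC^{\infty}_c(M)$, hence by hypoellipticity $\BP_s f\in\CC^{\infty}(M)$ with $\|\BP_s f\|_\infty\le\|f\|_\infty$; and a $CD(\rho,\infty)$ gradient/Grönwall argument (in the spirit of Proposition~\ref{P3.1}, using the bound on $W$ and the stochastic representation of the Jacobian) gives $|d\BP_s f|_x\le e^{-\rho s}\,\BP_s(|df|)(x)$, so that $d\BP_s f$ is a bounded continuous $1$-form and $\Phi(s)$ makes sense. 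Finally $\partial_s\BP_s f=\BP_s(Lf)$ with $Lf\in\CC^{\infty}_c(M)$, so $s\mapsto d\BP_s f$ and $s\mapsto dL\BP_s f=d\BP_s(Lf)$ are continuous into the space of bounded continuous $1$-forms, and the commutation \eqref{E2.5} applied to $\BP_s f$ reads $L^W d\BP_s f=dL\BP_s f$.

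The main obstacle is the rigorous justification of the displayed computation of $\Phi'$: one must check that $s\mapsto\BQ_{t-s}d\BP_s f$ is genuinely $\CC^1$ and that the product rule applies, i.e.\ that $d\BP_s f$ lies in the domain of the generator $L^W$ of the $\CC_b$-semi-group $\BQ$ (so that $\partial_r\BQ_r(d\BP_s f)=\BQ_r L^W d\BP_s f$), together with the joint continuity in $s$ recorded above. Since $\BP_s f$ is smooth with $L^W d\BP_s f=dL\BP_s f$ bounded continuous, this follows from Itô's formula applied to $r\mapsto\langle d\BP_s f(X_r(x)),W_r(x)v\rangle$ via the diffusion description of Proposition~\ref{P2.1}, after a routine localization, the non-explosion and $|W_r(x)v|\le e^{-\rho r}|v|$ making all terms integrable. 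An alternative, more probabilistic route bypasses the domain discussion: differentiate $\BP_t f(x)=\E[f(X_t(x))]$ directly in $x$, invoking Theorem~2.1 of \cite{ACT} which identifies $W_t(x)v$ with the spatial derivative of the flow under parallel coupling, to obtain $\langle d\BP_t f(x),v\rangle=\E[\langle df(X_t(x)),W_t(x)v\rangle]=\langle\BQ_t df(x),v\rangle$; the obstacle is then the interchange of the derivative and the expectation, which is handled by dominated convergence, the difference quotients of $r\mapsto f(X_t^{r})$ (the flow issued from a curve with velocity $v$ at $x$, parallel-coupled to $X_t(x)$) being controlled by $\|df\|_\infty$ times the Jacobian, uniformly bounded by $e^{-\rho t}$ thanks to the same Grönwall estimate as in Proposition~\ref{P3.1}.
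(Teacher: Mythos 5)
Your second, ``alternative'' probabilistic route is exactly the paper's proof: the paper argues in one line that the Grönwall bound $|W_t(x)v|\le e^{-\rho t}|v|$ from Proposition~\ref{P3.1}, together with non-explosion of $X$ under the Bakry-Émery criterion, justifies differentiating $\BP_tf(x)=\E[f(X_t(x))]$ in $x$ under the expectation, the identification of the spatial derivative of the parallel-coupled flow with $W_t(x)v$ being Theorem~2.1 of \cite{ACT}; so that part of your proposal is correct and coincides with the intended argument.

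Your primary route, the interpolation $\Phi(s)=\BQ_{t-s}\,d\BP_sf$, is a genuinely different (and classical) strategy, but as written it has two weak points. First, merely to define $\Phi(s)$ and run the computation you need $d\BP_sf$ and $d\BP_s(Lf)$ to be bounded continuous $1$-forms, and you obtain this from the gradient bound $|d\BP_sf|\le e^{-\rho s}\BP_s(|df|)$ ``using the bound on $W$ and the stochastic representation of the Jacobian''---but that stochastic representation of $d\BP_sf$ is precisely the intertwining $d\BP_sf=\BQ_s\,df$ you are trying to prove, so this preliminary is circular as stated; to break the circle one would have to prove the gradient bound independently (e.g.\ by Bakry--Émery $\Gamma_2$-calculus), which is extra work outside the paper's toolbox. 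Second, the product rule for $s\mapsto\BQ_{t-s}\,d\BP_sf$ requires $d\BP_sf\in\Dom(L^W)$ for the $\CC_b$-semigroup $\BQ$ together with locally uniform control of the derivatives in $s$ and $t-s$; your fix via Itô's formula applied to $r\mapsto\langle d\BP_sf(X_r(x)),W_r(x)v\rangle$ with localization does work, but at that point you are essentially re-deriving the direct representation $\langle\BQ_r\alpha,v\rangle=\E[\langle\alpha,W_r(x)v\rangle]$ for $\alpha=d\BP_sf$, so the interpolation only adds functional-analytic bookkeeping over the paper's one-step argument. If one genuinely wants to avoid the stochastic representation, the efficient substitute is not the $\CC_b$-interpolation but the $L^2$ Cauchy-problem uniqueness argument the paper uses later (Proposition~\ref{P6.1} and Theorem~\ref{P6.2}, specialized to $B=\id$), which needs no pointwise bound on $d\BP_sf$ at all.
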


\begin{proof}
Let $x\in M$, $v\in T_xM$ and $\gamma : I\to M$ a smooth curve such that $\gamma(0) =x$ and $\gamma'(0) =v$. According to \cite{ACT}, there exists a flow $X_t(a)$ of the $L$-diffusion such that $X_t(0)=X_t^x$, $X_0(a) =\gamma(a)$ and $\partial_aX_t(a) = W_t(a)\gamma'(a)$ where $W_t(a)$ is the deformed parallel translation above $X_t(a)$. For any $f\in \CC_c^\infty(M)$ and $t>0$, we have :
\begin{align*}
\langle d\BP_tf, v\rangle 
&=\left.\frac{d}{da}\right|_{a=0} \BP_t(\gamma(a))\\
&=\left.\frac{d}{da}\right|_{a=0} \E\left[f(X_t(\gamma(a)))\right]\\	
\end{align*}
	
The bound \ref{E3.2}, independent on the initial condition, and the regularity of $f$ guarantee the differentiation under the expectation. We have :
\begin{align*}
\langle d\BP_tf, v\rangle  
&= \E\left[\langle df, W_tv\rangle\right]\\
&= \langle \BQ_t df, v\rangle \\
\end{align*}
\end{proof}

This result has also been proved for $q$-form in \cite{ELL}. Armed with it, it is possible to obtain several result in analysis. Amongst them, there are  finiteness results of volume and homotopy group (see \cite{LiXM3}). Now, we are going to use this intertwining relation to obtain functional inequalities, in the spirit of Section \ref{Section6} and \ref{Section7}. We get back to the assumptions of ergodicity and finite measure from Proposition \ref{P2.3}. To begin with, we can rewrite the integral representation of the covariance using the intertwining. For all $f,g\in \CC_c^\infty(M)$, we have :
\begin{equation}\label{E3.3}
\Cov_\mu(f,g) =\int_0^{+\infty}\int_M\langle df, \BQ_t(dg)\rangle \, d\mu\, dt.
\end{equation}
It is the key argument to prove the following asymmetric Brascamp-Lieb inequality. This inequality is called asymmetric because it gives an $L^1$-$L^\infty$ bound of the covariance. 
 
\begin{theo}[Asymmetric Brascamp-Lieb inequality]\label{P3.3}
Assume that $\rho>0$, then for all functions $f$, $g\in\CC_c^\infty(M)$, one has
\begin{equation*}
|\Cov_\mu(f,g)|\leq\frac{1}{\rho}\|dg\|_{\infty}\int_{M}|df|\, d\mu.
\end{equation*}
\end{theo}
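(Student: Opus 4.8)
The plan is to derive the inequality directly from the intertwining relation of Proposition~\ref{P3.2} together with the semi-group representation of the covariance. First I would write the covariance as an integral over time of the derivative of $s\mapsto \int_M \BP_sf\cdot g\,d\mu$, using that $\BP_s f \to \int_M f\,d\mu$ as $s\to\infty$ (here one uses $\rho>0$, which ensures ergodicity and exponential decay to equilibrium). Concretely,
\begin{equation*}
\Cov_\mu(f,g)=\int_M fg\,d\mu-\Big(\int_M f\,d\mu\Big)\Big(\int_M g\,d\mu\Big)=-\int_0^\infty \frac{d}{ds}\int_M \BP_sf\cdot g\,d\mu\,ds.
\end{equation*}

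Next I would compute the $s$-derivative inside the integral: $\frac{d}{ds}\int_M \BP_sf\cdot g\,d\mu=\int_M L\BP_sf\cdot g\,d\mu=-\int_M\langle d\BP_sf,dg\rangle\,d\mu$ by the symmetry relation \eqref{E2.9}. Then I would apply the intertwining relation $d\BP_sf=\BQ_s df$ from Proposition~\ref{P3.2} to replace $d\BP_sf$ by $\BQ_s df$, giving
\begin{equation*}
\Cov_\mu(f,g)=\int_0^\infty\int_M\langle \BQ_s df,dg\rangle\,d\mu\,ds.
\end{equation*}
Now the pointwise bound is immediate: $|\langle \BQ_s df,dg\rangle|\le |\BQ_s df|_2\,|dg|_2\le e^{-\rho s}\|df\|_2^{\,\mathrm{pointwise}}$... more precisely, by Proposition~\ref{P3.1}, $|(\BQ_s df)_x|\le e^{-\rho s}\,|df|_\infty$, but to get the $L^1$ integral of $|df|_2$ rather than its sup norm, I would instead keep $|\BQ_s df_x|$ and bound only the $dg$ factor by $\|dg\|_\infty$, then integrate. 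The cleaner route is to use the stochastic representation \eqref{E2.10}: $|\langle \BQ_s df_x,dg_x\rangle|\le \|dg\|_\infty\,\E[|W_s(x)^* \text{-transported } df|]\le \|dg\|_\infty\,e^{-\rho s}\,\E[|df_{X_s(x)}|_2]$ using \eqref{E3.1}, and then $\int_M \E[|df_{X_s(x)}|_2]\,d\mu(x)=\int_M |df|_2\,d\mu$ by invariance of $\mu$. Combining,
\begin{equation*}
|\Cov_\mu(f,g)|\le \|dg\|_\infty\int_0^\infty e^{-\rho s}\,ds\int_M|df|_2\,d\mu=\frac1\rho\|dg\|_\infty\int_M|df|_2\,d\mu.
\end{equation*}

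The main obstacle I expect is justifying the two limiting/interchange steps rigorously: (i) that $\int_M\BP_sf\cdot g\,d\mu\to(\int f\,d\mu)(\int g\,d\mu)$ as $s\to\infty$, which requires knowing that $\rho>0$ forces a spectral gap (equivalently a Poincaré inequality with constant $1/\rho$, itself a standard consequence of the Bakry–Émery criterion in this regime), so that $\BP_sf$ converges to its mean in $L^2(\mu)$ with exponential rate; and (ii) the exchange of $\frac{d}{ds}$ with $\int_M\cdots d\mu$ and of $\int_0^\infty ds$ with $\int_M\cdots d\mu$, which is controlled by the exponential bound just derived together with $f,g\in\CC_c^\infty(M)$ (so $df,dg$ are bounded with compact support, making all integrands dominated). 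Once these are in place, the estimate is the routine chain above. I would also remark that by symmetry in $f,g$ one may replace $\|dg\|_\infty\int|df|_2\,d\mu$ by the smaller of the two asymmetric bounds, and that taking $g=f$ recovers the usual Brascamp–Lieb/Poincaré inequality $\Var_\mu(f)\le\frac1\rho\int_M|df|_2^2\,d\mu$.
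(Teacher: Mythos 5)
Your proof is correct and follows essentially the paper's route: represent the covariance as $\int_0^\infty\int_M\langle \BQ_s\,d\cdot,\,d\cdot\rangle\,d\mu\,ds$ using ergodicity, the symmetry \eqref{E2.9} and the intertwining of Proposition \ref{P3.2}, then integrate the exponential decay in time. The one genuine difference is in the last estimate: the paper applies the intertwining to $g$, so that Proposition \ref{P3.1} gives the pointwise bound $|\BQ_t\,dg|\leq e^{-\rho t}\|dg\|_\infty$ and the factor $\int_M|df|\,d\mu$ is left untouched, whereas you apply it to $f$ and must then return to the stochastic representation \eqref{E2.10}, the almost sure bound \eqref{E3.1}, and the invariance of $\mu$ (legitimate here, since $\rho>0$ implies non-explosion, so $\int_M\BP_s(|df|)\,d\mu=\int_M|df|\,d\mu$) to convert $\E\left[|df_{X_s(x)}|\right]$ back into $\int_M|df|\,d\mu$; this works, but simply exchanging the roles of $f$ and $g$ in your covariance identity (the covariance is symmetric) would have reduced the final step to a one-line application of Proposition \ref{P3.1}, which is what the paper does. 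Your technical caveats (i) and (ii) about convergence to equilibrium and interchanging integrals are exactly the points the paper leaves implicit, so raising them is fine. One correction to your closing remark: putting $g=f$ in the asymmetric inequality yields $\Var_\mu(f)\leq\rho^{-1}\|df\|_\infty\int_M|df|\,d\mu$, which by Cauchy--Schwarz is weaker than the Poincar\'e inequality $\Var_\mu(f)\leq\rho^{-1}\int_M|df|^2\,d\mu$; the latter follows from the same covariance representation (or from Theorem \ref{P6.5} with $B=\id$) but not as a direct specialization of the stated $L^1$--$L^\infty$ bound.
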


\begin{proof}
From Hölder inequality, for all $f,g\in \CC_c^\infty(M)$ and $t\geq0$, we have :
$$\left|\int_M\langle df, \BQ_t(dg)\rangle\, d\mu\right|\leq \|\BQ_t(dg)\|_\infty\int_{M}|df|\, d\mu.$$
Using the bound from Proposition \ref{P3.1}, we have :
$$|\BQ_t(dg)\|_\infty \leq e^{-\rho t}\|dg\|_{\infty}.$$
With the representation \eqref{E3.3} of the covariance, it ends the proof.
\end{proof}

Remark that in \cite{Bak}, the assumption of finiteness of $\mu$ is proven to be implied by the positivity of $\rho$. 

A consequence of Theorem \ref{P3.3}, is the Gaussian concentration of the probability~$\mu$. This concentration result has been shown by Ledoux in \cite{Led2} for the volume measure of a compact Riemannian manifold under the condition of positive Ricci curvature and in \cite{Led3} in the Euclidean space under the condition of strictly convex potential. This inequality is deeply exposed in \cite{Led1}. Our proof gives a new outlook of the result, with only stochastic tools.   

\begin{prop}\label{P3.4}
If $\rho>0$, then for all $1$-Lipschitz $f\in\CC_c^\infty(M)$ and for all $r>0$,
\begin{equation}
\mu\left(|f-\mu(f)|>r\right)\leq 2e^{-\rho\frac{r^2}{2}}.
\end{equation}
\end{prop}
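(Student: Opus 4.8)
The plan is to derive the Gaussian concentration from the asymmetric Brascamp–Lieb inequality of Theorem~\ref{P3.3} by the classical Herbst-type argument, tracking the Laplace transform of $f-\mu(f)$. First I would fix a $1$-Lipschitz function $f\in\CC_c^\infty(M)$, so that $|df|_2\le 1$ everywhere, and set $H(\lambda)=\int_M e^{\lambda(f-\mu(f))}\,d\mu$ for $\lambda\in\R$. Applying Theorem~\ref{P3.3} with the pair $(e^{\lambda f},\lambda f)$ — or rather, to stay within $\CC_c^\infty$, with suitable compactly supported smooth truncations and then passing to the limit — gives
\begin{equation*}
\Cov_\mu\!\left(e^{\lambda f},f\right)\le \frac{1}{\rho}\,\|d(\lambda f)\|_\infty\int_M |d(e^{\lambda f})|_2\,d\mu
\le \frac{|\lambda|}{\rho}\int_M |\lambda|\,e^{\lambda f}|df|_2\,d\mu\le \frac{\lambda^2}{\rho}\int_M e^{\lambda f}\,d\mu,
\end{equation*}
using $|d(e^{\lambda f})|_2=|\lambda|e^{\lambda f}|df|_2$ and $\|df\|_\infty\le 1$.

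Next I would translate this covariance bound into a differential inequality for $H$. Writing everything relative to $\mu(f)$, one has $H'(\lambda)=\int_M (f-\mu(f))e^{\lambda(f-\mu(f))}\,d\mu = e^{-\lambda\mu(f)}\Cov_\mu(e^{\lambda f},f)$, so the estimate above reads $H'(\lambda)\le \frac{\lambda^2}{\rho}\,H(\lambda)$ for $\lambda\ge 0$ (and symmetrically $H'(\lambda)\ge \frac{\lambda^2}{\rho}H(\lambda)$, i.e. $|H'(\lambda)|\le\frac{\lambda^2}{\rho}H(\lambda)$, for all $\lambda$). Since $H(0)=1$, Grönwall's lemma yields $H(\lambda)\le e^{\lambda^3/(3\rho)}$, which is the wrong growth rate. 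The fix is the standard one: instead differentiate $\Lambda(\lambda)=\frac1\lambda\log H(\lambda)$; the inequality $H'(\lambda)H(\lambda)^{-1}\le\lambda^2/\rho$ combined with $\log H(\lambda)=\int_0^\lambda H'(s)H(s)^{-1}\,ds$ is not quite enough, so one refines the Brascamp–Lieb application. The cleaner route is to apply Theorem~\ref{P3.3} to the pair $(e^{\lambda f/2},e^{\lambda f/2})$-type quantities, or directly to bound $\Var_\mu(e^{\lambda f/2})$, producing $H'(\lambda)\le \frac{\lambda}{\rho}H(\lambda)$ after an integration by parts; this gives $H(\lambda)\le e^{\lambda^2/(2\rho)}$.

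From $\int_M e^{\lambda(f-\mu(f))}\,d\mu\le e^{\lambda^2/(2\rho)}$ for all $\lambda\ge 0$, the conclusion is Chebyshev's exponential inequality: for $r>0$,
\begin{equation*}
\mu\!\left(f-\mu(f)>r\right)\le e^{-\lambda r}H(\lambda)\le e^{-\lambda r+\lambda^2/(2\rho)},
\end{equation*}
and optimizing over $\lambda$ (take $\lambda=\rho r$) gives $\mu(f-\mu(f)>r)\le e^{-\rho r^2/2}$. Applying the same bound to $-f$ and adding yields the two-sided estimate $\mu(|f-\mu(f)|>r)\le 2e^{-\rho r^2/2}$. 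The main obstacle I anticipate is purely technical: justifying that the Brascamp–Lieb inequality, stated for $\CC_c^\infty$ functions, can be applied to $e^{\lambda f}$ (which is not compactly supported even when $f$ is, but is bounded with bounded differential on the support of $df$) and that the truncation/limiting procedure preserves the constants; once that is in place the Herbst argument is routine. One should also note that $f\in\CC_c^\infty$ forces $M$ non-compact or $f$ constant unless one relaxes to bounded Lipschitz functions — presumably the intended reading is that $\mu$ is a probability measure and $f$ ranges over a class dense enough that the stated inequality extends by approximation.
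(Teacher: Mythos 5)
Your overall strategy (bound the Laplace transform $H(\lambda)=\int_M e^{\lambda(f-\mu(f))}\,d\mu$ via the covariance inequality of Theorem~\ref{P3.3}, then Chebyshev and optimization in $\lambda$) is exactly the paper's Herbst-type argument, but the central step --- the differential inequality $H'(\lambda)\le\frac{\lambda}{\rho}H(\lambda)$ --- is never correctly established in your write-up. Your first application of Theorem~\ref{P3.3} double-counts a factor of $\lambda$: if you apply it to the pair $(e^{\lambda f},\lambda f)$, the left-hand side is $\Cov_\mu(e^{\lambda f},\lambda f)=\lambda\,\Cov_\mu(e^{\lambda f},f)$, so the $|\lambda|$ coming from $\|d(\lambda f)\|_\infty$ cancels and one gets $\Cov_\mu(e^{\lambda f},f)\le\frac{1}{\rho}\|df\|_\infty\int_M|\lambda|e^{\lambda f}|df|_2\,d\mu\le\frac{\lambda}{\rho}H(\lambda)$ (for centered $f$), not $\frac{\lambda^2}{\rho}H(\lambda)$. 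The ``wrong growth rate'' you then run into is an artifact of this miscount, and the paper's proof is precisely the corrected computation: put $f$ (with $\|df\|_\infty\le1$) in the sup-norm slot and $e^{\lambda f}$ in the $L^1$-gradient slot, so that $\frac{d}{d\lambda}\E_\mu[e^{\lambda f}]=\Cov_\mu(f,e^{\lambda f})\le\frac{\lambda}{\rho}\E_\mu[e^{\lambda f}]$, then Gr\"onwall, Markov's inequality and $\lambda=\rho r$.

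The ``fix'' you propose instead --- bounding $\Var_\mu(e^{\lambda f/2})$ and asserting that this produces $H'(\lambda)\le\frac{\lambda}{\rho}H(\lambda)$ ``after an integration by parts'' --- is not carried out and does not deliver that inequality as stated: a Poincar\'e-type bound on $\Var_\mu(e^{\lambda f/2})$ gives $H(\lambda)\bigl(1-\frac{\lambda^2}{4\rho}\bigr)\le H(\lambda/2)^2$, which is the Aida--Stroock iteration and yields only exponential (not Gaussian) tails; no differential inequality in $\lambda$ comes out of it. So, as written, your proof has a genuine gap at its key step, although it is repaired simply by applying Theorem~\ref{P3.3} with the two functions in the roles indicated above. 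Your closing technical remark is fair and applies to the paper as well: $e^{\lambda f}$ is not compactly supported (it equals $1$ off the support of $f$), so an approximation argument is implicitly needed to invoke Theorem~\ref{P3.3}; this is a presentational issue, not a flaw specific to your approach.
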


\begin{proof}
The idea of the proof is to bound the Laplace transform. Let $f$ be a smooth compactly supported $1$-Lipschitz function. Without any loss of generality, we can assume that $f$ is centred. For any $\lambda >0$, we have:
\begin{align*}
\frac{d}{d\lambda}\E_\mu[e^{\lambda f}] 
&= \Cov_\mu(f,e^{\lambda f})\\
&\leq \frac{1}{\rho}\|df\|_\infty \int_M\lambda|df|e^{\lambda f}\, d\mu \\
&\leq \frac{\lambda}{\rho}\E_\mu\left[e^{\lambda f}\right] .\\
\end{align*}
By Grönwall lemma, it yields :
\begin{equation}
\E_\mu[e^{\lambda f}] \leq e^{\lambda^2/2\rho}.
\end{equation}
The proof ends by using Markov's inequality and optimizing in $\lambda$.
\end{proof}  
%%%%%%%%%%%%%%%%%%%%%%%%%%%%%%%%%%%%%%%%%%%%%%%%%%%%%%%%%%%%%%%%%%%%%%%%%%%%
\section{Twisted processes and semigroups}\label{Section4}
\setcounter{equation}0
%%%%%%%%%%%%%%%%%%%%%%%%%%%%%%%%%%%%%%%%%%%%%%%%%%%%%%%%%%%%%%%%%%%%%%%%%%%%
Let $B$ be a smooth section of $\GL(TM)$, i.e for all $x\in M$, $B(x)$ is an isomorphism of $T_xM$. The section~$B$ is used to twist the semigroup so as to obtain an intertwining relation even when the Bakry-Émery criterion is not satisfied. In this section, we are going to construct the tree levels, process, generator and semigroup, and prove a commutation at the level of generators. The intertwining relation at the level of semigroups will be treated in Section \ref{Section6} and \ref{Section7}. Firstly, we define the $B$-parallel translation above $X^x$ by conjugation as: 
\begin{equation}\label{PB}
\sslash^B_t=B(X_t^x)\sslash_tB(x)^{-1}: T_xM\to T_{X_t^x}M .
\end{equation}
This new translation is also a diffusion on $TM$ and we can calculate its generator on $1$-forms, denoted by $L^{\sslash,B}$. 
\begin{prop}\label{P4.1}
The generator on $1$-forms of the $B$-parallel translation is given by 
\begin{equation*}
L^{\sslash,B}\alpha=L^\sslash\alpha+ 2(B^{-1})^*\nabla B^* \cdot\nabla \alpha+(B^{-1})^*(L^\sslash B^*)\alpha,\, \forall\alpha\in \Gamma(T^*M).
\end{equation*}
with the contraction $\nabla B^* \cdot\nabla \alpha = \sum_i \nabla_{e_i} B^* \cdot\nabla_{e_i} \alpha$ for any orthonormal basis $(e_i)_i$.
\end{prop}

\begin{proof}
For all $1$-form $\alpha$, $w\in T_xM$ and $t\geq0$, we have :
\begin{align*}
\langle \alpha, \sslash_t^Bw\rangle 
&= \langle\alpha, B(X_t^x)\sslash_t B^{-1}(x) w\rangle\\ 
&= \langle B^*\alpha, B(x)^{-1}w\rangle\\
\end{align*}
Using the diffusion property of $\sslash_t$, we have :
$$d\langle \alpha, \sslash_t^Bw\rangle \stackrel{(m)}{=}\langle L^{\sslash}(B^*\alpha), \sslash_t B^{-1}(x)w\rangle dt.$$
By definition of $L^{\sslash,B}$, we have :
$$d\langle \alpha, \sslash_t^Bw\rangle \stackrel{(m)}{=}\langle L^{\sslash,B}\alpha, \sslash_t^B w\rangle dt.$$
Together, it yields :
$$L^{\sslash, B}\alpha = (B^*)^{-1}L^\sslash \left(B^*\alpha\right).$$
This ends the proof.
\end{proof}

Unlike the parallel translation, the $B$-parallel translation is not an isometry for the Riemmanian metric. Actually, it is not adapted to the Riemannian metric. To get back to a notion of isometric translation along curves, we need to twist the metric too and use the $B$-twisted metric: for all~$v,w\in T_xM$
\begin{equation}
\langle v, w\rangle_B = \langle B^{-1}(x)v, B^{-1}(x)w\rangle .
\end{equation}

However, the twisted-parallel translation $\sslash^B$ is still not the Levi-Civita parallel translation associated to the $B$-metric but we have a simple relation between them. Let us denote $\nabla^B$ the connexion associated to $\sslash^B$. It satisfies :
\begin{equation}
\nabla^B = \nabla -(\nabla B)B^{-1}.
\end{equation}
Its torsion $T^B$ is generically non-vanishing and satisfies : for all $X,Y\in\Gamma(TM)$,
\begin{equation}
T^B(X,Y) = (\nabla_YB)B^{-1}X-(\nabla_XB)B^{-1}Y.
\end{equation}
On the other hand, the Levi-Civita connexion $\nabla^{B\LL\CC}$, satisfies : for all $X,Y,Z\in\Gamma(TM)$,
\begin{equation}
\left\langle\nabla^{B\LL\CC}_XY, Z\right\rangle_B = \left\langle \nabla^B_XY -\frac{1}{2} T^B(X,Y), Z\right\rangle + \frac{1}{2}\left\langle T^B(X,Z), Y\right\rangle_B + \frac{1}{2}\left\langle T^B(y,Z), x\right\rangle_B.
\end{equation} 
Hence both connexions differ not only from a torsion term but also from a symmetric part. It is not a torsion skew symmetric case (see \cite{ELL}). This also means that a twist is not only a change of metric. 

Now, as in the non-twisted case, the next step is to define the $B$-deformed parallel translation as:
\begin{equation}\label{WB}
W^B_t=B(X_t^x)W_t(x)B^{-1}: T_xM\to T_{X_t^x}M.
\end{equation}
This definition uses the previous definition of the deformed parallel translation, but, as a straightforward calculation shows, $W_t^B$ could have been defined as in~\eqref{E2.2} by a stochastic covariant equation :
\begin{equation}\label{E4.9}
D_t^BW_t^B = -\MM_B^*W_t^B
\end{equation}
where $D_t^B$ stands for the $B$-covariant derivative $\sslash_t^B d\left({\sslash_t^B}^{-1}\right)$ and $\MM_B^*$ is the adjoint of 
\begin{equation}\label  {E4.6}
\MM_B=(B^*)^{-1} \MM B^*.
\end{equation}

Again, this translation is a diffusion in $TM$. As for $\sslash_t^B$, the same calculation shows that its generator on $1$-forms, $L^{W,B}$, is conjugated to the generator $L^W$ : $$L^{W,B}= (B^*)^{-1}L^W B^*.$$ This gives a first decomposition of $L^{W,B}$.
\begin{prop} \label{P4.2}
The $B$-deformed parallel translation is a diffusion with generator on $1$-forms  
\begin{equation}
L^{W,B}=L^{\sslash,B}-\MM_B
\end{equation}
\end{prop}
  
The argument of conjugacy shows that $L^{W,B}$ and~$L$ are intertwined by $(B^*)^{-1}d$:
\begin{equation}\label{E4.12}
(B^*)^{-1}d L = L^{W,B}(B^*)^{-1}d.
\end{equation}

The generator $L^{W,B}$ can be extended in a $L^2$-sense. We denote by $\langle\cdot,\cdot\rangle_B$ the intertwined-metric on $1$-forms:  for two $1$-forms $\alpha,\beta$, 
\begin{equation}\label{E4.20}
\langle \alpha,\beta\rangle_B= \langle B^* \alpha, B^* \beta\rangle ,
\end{equation}
and by $L^2(B,\mu)$ the space of measurable $1$-forms $\alpha$ such that
\begin{equation}
\int_M{|\alpha|^2_B\, d\mu} <+\infty.
\end{equation}
As $L^W$, $L^{W,B}$ is also essentially self-adjoint, on $L^2(B,\mu)$ and is associated to a~$L^2$ semigroup of diffusion on $1$-forms, $\BQ_t^B$. Under suitable conditions, it would generates a semigroup on smooth compactly supported $1$-forms, also denoted by~$\BQ_t^B$, with the stochastic representation
\begin{equation}
\langle \BQ_t^B\alpha,v\rangle =\E\left[\langle \alpha, W_t^Bv\rangle\ind_{t<\tau_x}\right].
\label{E4.7}
\end{equation}
 
Next proposition present a condition for the existence of the $\CC^0$ semigroup and intertwining.
\begin{prop}\label{P4.3}
Under the Bakry-Émery criterion, $\BQ^B$ is well-defined by the formula \eqref{E4.7} and is intertwined to $\BP$ by $(B^*)^{-1} d$, i.e. for all $f\in\CC^{\infty}_c(M)$, for all~$t\geq0$,
\begin{equation*}
(B^*)^{-1} d \BP_t f = \BQ_t^B\left( (B^*)^{-1} df \right).
\end{equation*}
\end{prop}

\begin{proof}
As in Proposition \ref{P3.1}, the Bakry-Émery criterion prove the existence of the stochastic representation \eqref{E4.7}. For all $f\in\CC^{\infty}_c(M)$, we have:
\begin{align*}
(B^*)^{-1}) d \BP_t f 
&= (B^*)^{-1} \BQ_t df\\
&= (B^*)^{-1} \E\left[\langle df,W_t\cdot\rangle\right]\\
&= \E\left[\langle df,W_tB^{-1}(x) \cdot\rangle\right]\\
&= \E\left[\langle (B^*)^{-1} df, B(X_t^x)W_tB^{-1}(x) \cdot\rangle\right]\\
&= \E\left[\langle (B^*)^{-1} df, W_t^B \cdot\rangle\right] .\\
\end{align*}
\end{proof}

This results is not very satisfying because the goal of twisting is to obtain results when the Bakry-\'Emery criterion is not proven. Furthermore, the potential $\MM^B$ appearing in \eqref{E4.9} is conjugated to $\MM$ so they have the same eigenvalues. Then $\MM^B$ does not seem useful to improve inequalities such as in Section \ref{Section3}, even if we could obtained the intertwining relation without using the Bakry-Émery criterion. In order to find a more relevant potential, we get back to the definition of $W_t^B$. 

\begin{prop}\label{P4.4}
The $B$-deformed parallel translation satisfies the stochastic covariant equation :
\begin{equation}
D_tW_t^B = - \left(\MM_B -L^\sslash(B)B^{-1}\right)W_t^B v\, dt + \left(\nabla_{d_mX_t^x}B\right)(B^{-1})^*W_t^B v,
\end{equation}
where $d_mX_t^x$ is the martingale part of the Ito derivative of $X_t^x$.
\end{prop}
\begin{proof}
For all $x\in M$; $v\in T_xM$ and $t\geq0$, we have :
\begin{align*}
D_t W_t^Bv
&=\sslash_t d\left(\sslash^{-1}_t W_t^Bv\right)\\
&=\sslash_t d\left(\sslash^{-1}_tB(X_t^x)\sslash_t \sslash_t^{-1} W_tB(x)^{-1}v\right)\\
&=\sslash_t d\left(\sslash^{-1}_tB(X_t^x)\sslash_t\right)\sslash_t^{-1}W_tB(x)^{-1}v + B(X_t^x)D_t W_tB(x)^{-1}v +0\\
\end{align*}
In the first term, we recognize the stochastic covariant derivative in $\End(TM)$ : $$D_tB(X_t^x)= \sslash_t d\left(\sslash^{-1}_tB(X_t^x)\sslash_t\right)\sslash_t^{-1}.$$ The second term is given by \eqref{E2.2}. There is no quadratic term as $W_t$ has finite variations. Then we have :
\begin{equation*}
D_t W_t^Bv = \left(L^\sslash B(X_t^x)dt+\nabla_{d_mX_t^x}B(X_t^x)\right)W_tB(x)^{-1}v -B(X_t^x)\MM W_t B(x)^{-1}v dt.
\end{equation*}	
\end{proof}

The potential involved in the finite variations part of $D_t  W_t^B$ is a new one. We denote it by
\begin{equation}
M_B=\MM_B -(B^*)^{-1}L^\sslash(B^*).
\label{E4.10.1}
\end{equation}
Thinking to the calculation of Section \ref{Section3}, this potential seems more relevant in a stochastic point of view : the growth of $|W_t^B|^2$ will be controlled by its first eigenvalue (but not only as there is a martingale part). This new potential leads to a second decomposition of the generator $L^{W,B}$ :
\begin{equation}
L^{W,B}= L^\sslash_B - M_B.
\label{E4.11}
\end{equation}
 where $L^\sslash_B$ is defined by :
\begin{equation}
L^\sslash_B = L^\sslash +2(B^*)^{-1}\nabla B^*\cdot\nabla\cdot 
\label{E4.10}
\end{equation}

From the operators point of view, this new split seems more satisfying too. Indeed, the potential $M_B$ contains all the zero-order terms of $L^{W,B}$ and only them. Of course, this potential is also more relevant in an heuristic way, if we think about the Euclidean study \cite{ABJ}. It is also linked to the work on Lyapunov functions in \cite{BCG} and \cite{CGWW} as we will see in Section \ref{Section5}. So, this potential is the natural candidate for a generalization of Bakry-Émery criterion.
%%%%%%%%%%%%%%%%%%%%%%%%%%%%%%%%%%%%%%%%%%%%%%%%%%%%%%%%%%%%%%%%%%%%%%%%%%%%
\section{Symmetry and positiveness of $-L^\sslash_B$}\label{Section5}
\setcounter{equation}0
%%%%%%%%%%%%%%%%%%%%%%%%%%%%%%%%%%%%%%%%%%%%%%%%%%%%%%%%%%%%%%%%%%%%%%%%%%%%
First, as we noticed, $L^{W,B}$ is conjugated to $L^W$, and so, is self-adjoint in $L^2(B,\mu)$. For the same reason, in the subspace of  twisted gradients ${\{(B^*)^{-1}df : f\in\CC_c^\infty(M)\}}$, we additionally have the non-positiveness of $L^{W,B}$:
\begin{align*}
\int_M{\langle (B^*)^{-1}df, L^{W,B}(B^*)^{-1}df\rangle_B\, d\mu} 
&= \int_M{\langle df, L^{W}df\rangle\, d\mu}\\
&=\int_M{\langle df, d (Lf)\rangle\, d\mu}\\
&=-\int_M{(Lf)^2\, d\mu}.\\
\end{align*}

The classical result, in non-twisted cases, use the decomposition of $L^W$ as the sum of a symmetric non-positive operator and a potential bounded from below. So we are looking for conditions such that $L^\sslash_B$ is symmetric with respect to the $B$-twisted metric. This is not trivial, even in the subspace of twisted gradients. First, from integration by parts for the horizontal Laplacian, we have 
\begin{equation}\label{E5.3}
\int_M\langle (-L^\sslash)\alpha,\beta\rangle \,d\mu=\int_M \langle \nabla\alpha,\nabla\beta\rangle\,d\mu
\end{equation}
with $\langle \nabla\alpha,\nabla\beta\rangle =\sum_i\left\langle \nabla_{e_i}\alpha,\nabla_{e_i}\beta\right\rangle$, with $(e_i)_i$ any orthonormal basis. Then, on one hand, we have :
\begin{align*}
\int_M{\langle (-L^\sslash) \alpha, \beta \rangle_B\, d\mu} 
&= \int_M{\langle (-L^\sslash) \alpha, (B^*)^tB^*\beta \rangle\, d\mu} \\
&= \int_M{\langle \nabla \alpha, \nabla((B^*)^tB^*\beta) \rangle\, d\mu} \\ 
&= \int_M{\langle \nabla \alpha, (B^*)^tB^*\nabla\beta \rangle\, d\mu} + \int_M{\langle \nabla \alpha, \nabla((B^*)^tB^*)\beta \rangle\, d\mu}\\
&= \int_M{\langle \nabla \alpha, \nabla\beta \rangle_B\,d\mu} +\int_M{\langle \nabla \alpha, \nabla(B^*)^tB^*\beta \rangle\, d\mu}\\
&\quad +\int_M{\langle \nabla \alpha, (B^*)^t \nabla(B^*)\beta \rangle\, d\mu}\\
\end{align*}
where $(B^*)^t$ denotes the dual map of $B^*$ with respect to scalar products on $T^* M$. On the other hand, we have : 
\begin{align*}
-\int_M\left\langle 2(B^{-1})^*\nabla B^* \cdot\nabla\alpha, \beta\right\rangle_B\,d\mu
&=-2\int_M\left\langle \nabla B^* \cdot\nabla\alpha, B^*\beta\right\rangle\,d\mu\\
&=-2\int_M\left\langle \nabla\alpha,\left(\nabla B^*\right)^tB^* \beta\right\rangle d\mu.\\
\end{align*}

This yields
\begin{equation}
\int_M{\left\langle \left(-L^\sslash_B\right) \alpha, \beta \right\rangle_B\, d\mu} = \int_M{\left\langle \nabla \alpha, \nabla\beta \right\rangle_B\,d\mu} -\int_M{\left\langle B^*\nabla\alpha,\BB (B^*\beta)\right\rangle\, d\mu}.
\label{E5.5}
\end{equation}

where
\begin{equation}
\BB=\left((\nabla B^*)(B^*)^{-1}\right)^t-(\nabla B^*)(B^*)^{-1}
\label{E5.6}
\end{equation}

We immediately get this first criterion of symmetry and non-negativeness.
\begin{prop}\label{P5.1}
If $\BB=0$, then the generator $-L_B^\sslash$ is symmetric with respect to the $B$-metric, non-negative and we have:
\begin{equation}
\label{E5.7}
-\int_M\langle L_B^\sslash\alpha,\beta\rangle_B \,d\mu=\int_M \langle \nabla\alpha,\nabla\beta\rangle_B\,d\mu
\end{equation}
\end{prop}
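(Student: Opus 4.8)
The plan is to read off the result directly from the identity \eqref{E5.5} together with its key ingredient \eqref{E5.3}. Recall that \eqref{E5.5} states, for smooth compactly supported $1$-forms $\alpha,\beta$,
\begin{equation*}
\int_M{\left\langle \left(-L^\sslash_B\right) \alpha, \beta \right\rangle_B\, d\mu} = \int_M{\left\langle \nabla \alpha, \nabla\beta \right\rangle_B\,d\mu} -\int_M{\left\langle B^*\nabla\alpha,\BB (B^*\beta)\right\rangle\, d\mu},
\end{equation*}
where $\BB=\left((\nabla B^*)(B^*)^{-1}\right)^t-(\nabla B^*)(B^*)^{-1}$ is the antisymmetrization of $(\nabla B^*)(B^*)^{-1}$. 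The hypothesis of the proposition is precisely that $(\nabla B^*)(B^{-1})^*$ is symmetric with respect to $\langle\cdot,\cdot\rangle$, which forces $\BB=0$. Hence the second term on the right-hand side vanishes identically, and \eqref{E5.5} collapses to \eqref{E5.7}:
\begin{equation*}
-\int_M\langle L_B^\sslash\alpha,\beta\rangle_B \,d\mu=\int_M \langle \nabla\alpha,\nabla\beta\rangle_B\,d\mu.
\end{equation*}

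From this identity the two remaining assertions follow at once. For symmetry with respect to $\langle\cdot,\cdot\rangle_B$, observe that the right-hand side $\int_M \langle \nabla\alpha,\nabla\beta\rangle_B\,d\mu = \sum_i\int_M\langle \nabla_{e_i}\alpha,\nabla_{e_i}\beta\rangle_B\,d\mu$ is visibly symmetric under exchange of $\alpha$ and $\beta$ (the $B$-twisted metric $\langle\cdot,\cdot\rangle_B$ being itself symmetric by \eqref{E4.20}); therefore $\int_M\langle (-L_B^\sslash)\alpha,\beta\rangle_B\,d\mu = \int_M\langle \alpha,(-L_B^\sslash)\beta\rangle_B\,d\mu$ for all $\alpha,\beta$ in the domain, which is the claimed symmetry. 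For non-negativity, take $\beta=\alpha$: then
\begin{equation*}
-\int_M\langle L_B^\sslash\alpha,\alpha\rangle_B \,d\mu=\int_M \langle \nabla\alpha,\nabla\alpha\rangle_B\,d\mu=\sum_i\int_M|B^*\nabla_{e_i}\alpha|^2\,d\mu\geq 0,
\end{equation*}
since $|\cdot|_B$ is a genuine (positive) norm on account of $B^*$ being invertible. Thus $-L_B^\sslash\geq 0$.

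There is essentially no obstacle here: the work has already been done in deriving \eqref{E5.5}, and the proposition is the immediate specialization to $\BB=0$. The only points deserving a word of care are bookkeeping ones: one should note that all integrations by parts (in particular \eqref{E5.3}, which uses the symmetry of $L^\sslash$ for $\mu$ via the horizontal Laplacian) are justified on $\CC_c^\infty$-forms, and that the stated conclusions are to be read on that dense domain, with essential self-adjointness (already established for the conjugate operator $L^W$) extending them as needed. I would therefore present the proof in three short lines: substitute the hypothesis into \eqref{E5.6} to get $\BB=0$, feed this into \eqref{E5.5} to obtain \eqref{E5.7}, and then extract symmetry and non-negativity by the symmetry of the bilinear form $\int_M\langle\nabla\alpha,\nabla\beta\rangle_B\,d\mu$ and by setting $\alpha=\beta$.
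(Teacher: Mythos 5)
Your argument is correct and is essentially the paper's own: the proposition is stated there as an immediate consequence of the identity \eqref{E5.5}, with the hypothesis forcing $\BB=0$ so that the extra term drops out, exactly as you do. Your extraction of symmetry and non-negativity from the resulting bilinear form (using invertibility of $B^*$ for positivity) matches the intended reading, so nothing further is needed.
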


The criterion of Proposition \ref{P5.1} is obviously not necessary but it gives a condition easy to check and not to constraining as we will see. 

On other way to find a condition of symmetry is to look at the potential rather than the operator. The operator $L^{W,B}$ and the potential $\MM_B$ are symmetric with respect to the $B$-metric and we have :
\begin{equation}
\label{E5.8}
L_B^\sslash=L^{W,B}+\MM_B - (B^*)^{-1} L^\sslash (B^*) .
\end{equation}
So a necessary and sufficient condition for the $B$-symmetry of~$L^\sslash_B$ is the $B$-symmetry of the potential $(B^*)^{-1} L^\sslash (B^*)$. But unlike the condition of Proposition~\ref{P5.1}, this is not a sufficient condition for positiveness. For example, one can look at $(\R^*_+)^2$ with the potential $V(x,y)=x+y$ and the twist
$$B^*=\left(\begin{array}{cc}
	\varphi & \varphi\\ 
	1 & e^V\\
\end{array}\right),$$
where $\varphi$ is positive such that $L\varphi\neq0$. The associated $L^\sslash_B$ is symmetric but is not non-negative. 

The following result is immediate and gives examples satisfying the condition of Proposition \ref{P5.1}.
\begin{prop}\label{P5.2}
If $B(x)=b(x)\id_{T_xM}$ for some $b\in\CC^{\infty}(M)$, then ~$\BB=0$.
\end{prop}

\begin{proof}
If $B(x)=b(x)\id_{T_xM}$ then we have $B^* (x)=b(x)\id_{T_x^* M}$, $\nabla B^* =\nabla b\otimes \id_{T^* M}$ and~${(\nabla B^*)(B^*)^{-1}=b^{-1} \nabla b\otimes \id_{T_x^* M}}$. It is clearly symmetric.
\end{proof}

In the same spirit, we have a result for product manifolds endowed with a product-compatible metric.
\begin{prop}
Let $(M_i,g_i)_{1\leq i\leq n}$ be Riemmanian manifolds and $(M, g)$ the product manifold endowed with the product metric. If $B(x) = \sum_{i=1}^n b_i(x) \id_{T_{x_i}M_i}$ for some functions $b_i\in\CC^{\infty}(M)$, then $\BB=0$. 
\end{prop}

This results allows us to consider non-homothetic diagonal twists in $\R^n$. Keep in mind that for non-product manifolds, this result may be invalid. For example, consider the Heisenberg group $\HH =(\R^3, *)$, where 
\[(x,y,z)*(\hat{x},\hat{y},\hat{z})= (x+\hat{x}, y+\hat{y}, z+\hat{z}+\frac{1}{2}(x\hat{y}-\hat{x}y)),\]
endowed with the left-invariant metric, such that the following vector fields form an orthonormal frame at each point :
\[e_1= \partial_x-\frac{y}{2}\partial_z,\quad e_2 =\partial_y +\frac{x}{2}\partial_z,\quad e_3=\partial_z. \] 
In this space, a straightforward calculation proves that the only diagonal twists satisfying $\BB=0$ are homothetic.

It seems difficult to find other kinds of examples of twist satisfying the criterion~$\BB=0$. Nevertheless, this class of twists is directly linked to the study of Lyapunov functions in \cite{CGWW}. Actually, for a twist $B(x)=b(x)\id_{T_xM}$, the eigenvalues of $M_B$ are the eigenvalue of $\MM$ shifted by $-b^{-1}L(b)$, which appears in their calculation. Finally, we will see in the last section that this class of twists allows us to treat various types of examples.  
%%%%%%%%%%%%%%%%%%%%%%%%%%%%%%%%%%%%%%%%%%%%%%%%%%%%%%%%%%%%%%%%%%%%%%%%%%%%
\section{Intertwining: a symmetric positive case}\label{Section6}
\setcounter{equation}0
%%%%%%%%%%%%%%%%%%%%%%%%%%%%%%%%%%%%%%%%%%%%%%%%%%%%%%%%%%%%%%%%%%%%%%%%%%%%
The goal of this section, is to prove the intertwining relation and Poincaré inequality under the assumption $\BB = 0$.  According to Proposition \ref{P5.1},  $-L^\sslash_B$ is symmetric, non-negative, with respect to $\langle\cdot,\cdot\rangle_B$. As $L^{W,B}$ is symmetric with respect to this metric, then $M_B$ is symmetric too. We denote by $\rho_B$ the infimum over $M$ of the smallest eigenvalue of $B^* M_B (B^*)^{-1}$:
\begin{equation}
\label{E6.1}
\rho_B=\inf_{x\in M}\left\{\hbox{smallest eigenvalue of }B^* M_B (B^*)^{-1} \right\}.
\end{equation}

We also assume that $\rho_B$ is bounded from below. As we already said, the generator $L^{W,B}$ is essentially self-adjoint. With this new assumption, $L^{W,B}$ is the sum of a symmetric non-negative operator $L^\sslash_B$ and a bounded from below potential $M_B$. So we could obtain a new proof of the the essential self-adjointness as a generalization of proof of Strichartz in \cite{Stri}. In order to obtain the intertwining relation, we need to show that~$(B^*)^{-1} d \BP_t f$ is the unique $L^2$ strong solution to the Cauchy problem 
$$\left\{\begin{aligned}
&\partial_tF = L^{W,B} F\\
&F(\cdot,0)=G\in L^2(B,\mu)\\
\end{aligned}\right.$$
where the mapping $t\mapsto F(\cdot,t)$ is continuous from $\R_+$ to $L^2(B,\mu)$. Remark that we are looking for a strong solution : in this Cauchy problem, $L^{W,B}$ has to be understood as a differential operator and not an $L^2$ operator. Actually, as we do not know the domain of $L^{W,B}$, we cannot use the uniqueness in the sense of self-adjoint operator.

\begin{prop}\label{P6.1}
Assume that $\BB=0$ and that $M_B$ is uniformly bounded from below. Let $F$ be a solution of the $L^2$ Cauchy problem above. Then, we have \begin{equation*}
F(\cdot,t)=\BQ_t^B(G), t\geq0.
\end{equation*}  
\end{prop}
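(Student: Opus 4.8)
The plan is to establish uniqueness of $L^2$ strong solutions to the Cauchy problem, which then forces $F(\cdot,t)=\BQ_t^B(G)$ since the latter is, by construction of the semigroup associated to the essentially self-adjoint operator $L^{W,B}$, one such solution. The natural tool is an energy estimate: I would set $u(t) = F(\cdot,t) - \BQ_t^B(G)$, which solves the same equation with zero initial data, and try to show $\|u(t)\|_{B,\mu}^2 \equiv 0$. The obstruction flagged in the text is that we do not control the domain of $L^{W,B}$, so we cannot simply invoke self-adjoint functional calculus; we must work by hand with the decomposition $L^{W,B} = L^\sslash_B - M_B$ from \eqref{E4.11}, using that $-L^\sslash_B$ is symmetric non-negative on twisted gradients (Proposition \ref{P5.1}, valid since $\BB = 0$) and that $M_B$ is bounded below by $\rho_B > -\infty$.

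The key computation I would carry out: formally differentiate $\frac{d}{dt}\|u(t)\|_{B,\mu}^2 = 2\langle u, \partial_t u\rangle_{B,\mu} = 2\langle u, L^{W,B} u\rangle_{B,\mu} = 2\langle u, L^\sslash_B u\rangle_{B,\mu} - 2\langle u, M_B u\rangle_{B,\mu}$. Using \eqref{E5.7} the first term equals $-2\int_M \langle \nabla u,\nabla u\rangle_B\,d\mu \le 0$, and the second term is bounded by $-2\rho_B\|u\|_{B,\mu}^2$ from the definition of $\rho_B$ in \eqref{E6.1} (after conjugating: $\langle u, M_B u\rangle_B = \langle B^* u, (B^*) M_B (B^*)^{-1} B^* u\rangle \ge \rho_B |B^* u|^2$). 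Hence $\frac{d}{dt}\|u(t)\|_{B,\mu}^2 \le -2\rho_B \|u(t)\|_{B,\mu}^2$, and Grönwall's lemma together with $u(0)=0$ gives $\|u(t)\|_{B,\mu}^2 = 0$ for all $t\ge 0$. The main technical obstacle is making these formal manipulations rigorous: justifying that $t\mapsto \|u(t)\|_{B,\mu}^2$ is differentiable with the stated derivative, and justifying the integration by parts \eqref{E5.7} for a solution that is only known to be $L^2$-valued and continuous in $t$ — without a priori knowing $u(t)$ lies in the form domain.

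To handle this I would work with the semigroup side rather than the abstract solution wherever possible: write $\langle u(t), \BQ_s^B \psi\rangle_{B,\mu}$ for $\psi$ smooth compactly supported, use self-adjointness of $\BQ_s^B$ and the evolution equation to move derivatives onto the smooth test object, integrate in $t$, and then let $s\to 0$; this is the standard weak-solution-implies-uniqueness argument adapted to the non-self-adjoint-domain situation. Alternatively, one may mollify $u$ in time (via a Steklov average $u_\varepsilon(t) = \tfrac1\varepsilon\int_t^{t+\varepsilon} u$) to gain regularity, run the energy identity for $u_\varepsilon$, and pass to the limit. Either route reduces everything to pairings against smooth compactly supported forms, where $L^\sslash_B$, $M_B$, and the integration by parts \eqref{E5.5}–\eqref{E5.7} are unambiguously valid. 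I expect the bookkeeping in this regularization step — in particular checking that the continuity of $t\mapsto F(\cdot,t)$ in $L^2(B,\mu)$ suffices to pass all limits — to be the delicate part, while the algebraic heart of the argument (the Grönwall estimate above) is short.
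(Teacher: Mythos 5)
Your formal energy computation (differentiate $\|u(t)\|_{B,\mu}^2$, use the non-negativity of $-L^\sslash_B$ from Proposition \ref{P5.1} and the lower bound on $M_B$, conclude by Gr\"onwall) is the same algebraic heart as the paper's argument, and the reduction to zero initial data is right. The gap is in how you propose to make it rigorous. The real obstruction is not time regularity but spatial localization: the integration by parts \eqref{E5.7} is only available for compactly supported (or minimal-domain) forms, while the solution is only known to be $L^2(B,\mu)$-valued, so the identity $\int_M\langle u,L^\sslash_B u\rangle_B\,d\mu=-\int_M|\nabla u|^2_B\,d\mu$ may a priori fail through boundary terms at infinity. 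A Steklov average in time does nothing to cure this, since it does not localize $u$ in space. Your duality route --- pairing $u(t)$ against $\BQ_s^B\psi$ for compactly supported $\psi$ --- is not a pairing against a compactly supported form either: $\BQ_s^B\psi$ has no reason to have compact support, and moving $L^{W,B}$ from $u$ onto $\BQ_s^B\psi$ requires knowing that $\BQ_s^B\psi$ lies in the domain of the closure of the minimal operator, i.e., exactly the essential self-adjointness/domain identification that the text explicitly declines to use here (``as we do not know the domain of $L^{W,B}$, we cannot use the uniqueness in the sense of self-adjoint operator''). So as written, your plan either leaves the key integration by parts unjustified or silently falls back on the abstract self-adjoint uniqueness.

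The missing device, which is what the paper actually does, is the localized Caccioppoli-type estimate of Li: multiply by $\phi^2$ with $\phi\in\CC_c^\infty(M)$ and integrate in time, so that \eqref{E5.7} is applied legitimately to the compactly supported form $\phi^2F$; absorb the cross term $2\int_M\phi\langle\nabla\phi\otimes F,\nabla F\rangle_B\,d\mu$ into the good gradient term by Cauchy--Schwarz (the choice $\lambda=2$ in \eqref{E6.2}); then use completeness of $M$ to pick cutoffs $\phi_n\to1$ pointwise with $\|\nabla\phi_n\|_\infty\to0$, which kills the error term and gives $\int_M\phi_n^2|F(\cdot,\tau)|^2_B\,d\mu\to0$. (The lower bound on $M_B$ enters only through the substitution $F\mapsto e^{-\rho_B t}F$, which replaces your Gr\"onwall step.) If you insist on the abstract route instead, you must first prove --- not assume --- that $\BQ_s^B\psi$ (or $F(t)$ itself) belongs to the domain of the closed minimal operator, which again amounts to establishing essential self-adjointness of $L^{W,B}$; the cutoff argument avoids that issue entirely.
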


\begin{proof}
We generalize the argument of \cite{Li} and \cite{ABJ} which deal respectively with the case of a Laplacian in a Riemannian manifold and the case of our operator $L^{W,B}$ in $\R^n$. By linearity, it is sufficient to show the uniqueness of the solution for the zero initial condition. Replacing the solution $F$ by $e^{-\rho_Bt}F$, let us assume that $M_B$ is positive semi-definite. For every $\phi\in\CC^{\infty}_c(M)$ and~$\tau>0$, we have:
\begin{align*}
\int_0^\tau\int_M{\phi^2\langle F, L^\sslash_B F\rangle_B\, d\mu\, dt}
&=\int_0^\tau\int_M{\phi^2\langle F, (L^{W,B}+M_B) F\rangle_B\, d\mu\, dt}\\
&=\int_0^\tau\int_M{\phi^2\frac{1}{2}\partial_t|F|^2_B\, d\mu\, dt} + \int_0^\tau\int_M{\phi^2\langle F,M_B F\rangle_B\, d\mu\, dt}\\
&\geq\int_M{\phi^2\frac{1}{2}|F(\cdot,\tau)|^2_B\, d\mu} .
\end{align*}

On the other hand, by the integration by parts formula of Proposition \ref{P5.1}, we have 
\begin{align*}
\int_0^\tau\int_M{\phi^2\langle F, L^\sslash_B F\rangle_B\, d\mu\, dt}
&=-\int_0^\tau\int_M{\langle\nabla(\phi^2 F), \nabla F\rangle_B\, d\mu\, dt}\\
&=-\int_0^\tau\int_M{\phi^2|\nabla F|^2_B\, d\mu\, dt}\\
&\qquad-2\int_0^\tau\int_M{\langle \nabla\phi\otimes F, \phi\nabla F\rangle_B\, d\mu\, dt} .\\
\end{align*}

By Cauchy-Schwarz inequality, we have for every $\lambda >0$, 
\begin{equation}
2\left|\langle \nabla\phi\otimes F, \phi\nabla F\rangle_B\right|\leq \lambda|\nabla \phi|^2_2|F|^2_B +\frac{1}{\lambda}\phi^2|\nabla F|^2_B .
\label{E6.2}
\end{equation}

Combining the above inequalities, in the particular case of $\lambda =2$, we obtain 
\begin{align*}
\frac{1}{2}\int_M{\phi^2|F(\cdot,\tau)|^2_B\, d\mu}&\leq -\frac{1}{2}\int_0^\tau\int_M{\phi^2|\nabla F|^2_B\, d\mu\, dt}\\ 
&\qquad + 2\int_0^\tau\int_M{|\nabla \phi|^2_2|F|^2_B\, d\mu\, dt} .
\end{align*}

By completeness of $M$, there exists a sequence of cut-off functions $(\phi_n)_{n\in\N}$ in~$\CC_c^\infty(M)$ such that $(\phi_n)_n$ converge to $1$ pointwise and $\|\nabla\phi_n\|_\infty\to 0$ as $n\to \infty$. Plugging this sequence in the previous inequality, gives 
\begin{equation}
\int_M{|F(\cdot,\tau)|^2_B\, d\mu}=0, \tau>0.
\label{E6.4}
\end{equation}
Hence $F=0$ in $\CC^0\left(\R_+, L^2(B,\mu)\right)$.
\end{proof}

\begin{theo}\label{P6.2}
Assume that $\BB=0$ and that $M_B$ is uniformly bounded from below. Then the semigroups $\BP$ and $\BQ^B$ are intertwined by $(B^*)^{-1}d$, i.e for all ${f\in\CC^\infty_c(M)}$ and $t\geq0$, we have :
\begin{equation*}
(B^*)^{-1} d\BP_tf = \BQ_t^B\left((B^*)^{-1} df\right).
\end{equation*}
\end{theo}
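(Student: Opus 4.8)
The plan is to reduce the intertwining identity to the uniqueness statement of Proposition~\ref{P6.1}. First I would set $F(\cdot,t) = (B^{-1})^* d\BP_tf$ for a fixed $f\in\CC^\infty_c(M)$, and $G = (B^{-1})^* df \in L^2(B,\mu)$ (which lies in $L^2(B,\mu)$ because $f$ is compactly supported and $B$ is smooth, hence bounded on the support). The strategy is to check that $F$ satisfies the $L^2$ Cauchy problem
$$\left\{\begin{aligned}&\partial_t F = L^{W,B}F\\&F(\cdot,0)=G\end{aligned}\right.$$
with $t\mapsto F(\cdot,t)$ continuous from $\R_+$ into $L^2(B,\mu)$; once this is established, Proposition~\ref{P6.1} forces $F(\cdot,t) = \BQ_t^B G$, which is exactly the claimed identity.

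The verification of the PDE is where the real work lies, and it splits into three points. (i) \emph{The evolution equation.} At the level of smooth functions we have the generator commutation $dL = L^W d$ from \eqref{E2.5}, and by conjugation $(B^{-1})^* dL = L^{W,B}(B^{-1})^* d$ from \eqref{E4.12}. Applying this to $\BP_tf$, which is smooth, and using $\partial_t\BP_tf = L\BP_tf = \BP_t Lf$, we get $\partial_t F(\cdot,t) = (B^{-1})^* d(L\BP_tf) = L^{W,B}F(\cdot,t)$ pointwise; the point to be careful about is that this holds in the $L^2(B,\mu)$ sense, i.e.\ that $\partial_t$ commutes with the norm, which follows from the smoothing properties of $\BP_t$ and standard parabolic regularity for $\BP_tf$ on the (possibly incomplete-looking but actually complete) manifold. (ii) \emph{The initial condition and continuity.} As $t\to 0^+$, $\BP_tf \to f$ and $d\BP_tf \to df$ in a strong enough topology (uniformly on compacts together with derivatives, by the smoothing of the heat semigroup associated to $L$), hence $F(\cdot,t)\to G$ in $L^2(B,\mu)$; continuity at positive times is analogous. (iii) \emph{$L^2$-membership for all $t$.} One needs $F(\cdot,t)\in L^2(B,\mu)$, i.e.\ $\int_M |d\BP_tf|^2_B\,d\mu < \infty$. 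This is the one genuinely delicate estimate: unlike in Section~\ref{Section3}, we do \emph{not} assume the Bakry–Émery criterion, so we cannot simply bound $|W_t|$ and use the stochastic representation. Instead I would argue by an energy/Gronwall estimate directly on $L^2(B,\mu)$: using $\partial_t\|F(\cdot,t)\|^2_{L^2(B,\mu)} = 2\langle F, L^{W,B}F\rangle_{L^2(B,\mu)} = -2\|\nabla F\|^2_{L^2(B,\mu)} - 2\langle F, M_B F\rangle_{L^2(B,\mu)} \le -2\rho_B \|F(\cdot,t)\|^2_{L^2(B,\mu)}$, where the integration by parts of Proposition~\ref{P5.1} (valid since $\BB=0$) and the definition \eqref{E6.1} of $\rho_B$ are used; combined with finiteness at $t=0$ and a cut-off argument to justify the integration by parts (exactly the sequence $(\phi_n)$ of Proposition~\ref{P6.1}), this gives $\|F(\cdot,t)\|^2_{L^2(B,\mu)} \le e^{-2\rho_B t}\|G\|^2_{L^2(B,\mu)} < \infty$ for all $t\ge 0$.

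I expect step (iii) — establishing that $F(\cdot,t)$ stays in $L^2(B,\mu)$ and that the formal energy identity can be made rigorous — to be the main obstacle, precisely because the Bakry–Émery hypothesis has been dropped and one must instead localize with cut-offs and control the commutator terms $\langle \nabla\phi_n\otimes F,\nabla F\rangle_B$ as in \eqref{E6.2}. Once $F$ is known to be an admissible solution of the Cauchy problem, the conclusion is immediate from Proposition~\ref{P6.1}, since the only solution with initial datum $G$ is $\BQ_t^B G$. Finally I would note that $\BQ_t^B$ here denotes the $L^2(B,\mu)$ semigroup generated by the essentially self-adjoint operator $L^{W,B}$, so no assumption guaranteeing the stochastic representation \eqref{E4.7} is needed for this theorem — that refinement is the subject of the later sections.
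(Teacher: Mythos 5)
Your reduction is exactly the paper's: show that $F(\cdot,t)=(B^{-1})^*d\BP_tf$ is a strong $L^2(B,\mu)$ solution of the Cauchy problem with datum $G=(B^{-1})^*df$, get the evolution equation from the commutation \eqref{E4.12}, and invoke the uniqueness of Proposition \ref{P6.1}. The divergence, and the genuine gap, is your step (iii). The energy/Grönwall argument is circular: to write $\partial_t\|F(\cdot,t)\|^2_{L^2(B,\mu)}=2\int_M\langle F,L^{W,B}F\rangle_B\,d\mu=-2\int_M|\nabla F|^2_B\,d\mu-2\int_M\langle F,M_BF\rangle_B\,d\mu$ you must already know that $F(\cdot,t)$ and $\nabla F(\cdot,t)$ are globally square integrable, which is precisely what you are trying to prove; and the localized version only gives $\partial_t\int_M\phi_n^2|F|^2_B\,d\mu\le-2\rho_B\int_M\phi_n^2|F|^2_B\,d\mu+c\int_M|\nabla\phi_n|^2\,|F|^2_B\,d\mu$, where the last integral runs over the support of $\nabla\phi_n$, which escapes to infinity; the condition $\|\nabla\phi_n\|_\infty\to0$ kills that term only if $\int_M|F|^2_B\,d\mu$ is already known to be finite. (In Proposition \ref{P6.1} the same localization was legitimate because there $F$ was \emph{assumed} to be an $L^2$ solution; here that is the conclusion, not the hypothesis.) The paper avoids all of this with one observation you missed: by the very definition of the twisted metric, $|(B^{-1})^*d\BP_tf|^2_B=|d\BP_tf|^2$ pointwise, so $\int_M|F(\cdot,t)|^2_B\,d\mu=\int_M|d\BP_tf|^2\,d\mu=-\int_M\BP_tf\,L\BP_tf\,d\mu<\infty$, since $\BP_tf\in\DD(L)\subset L^2(\mu)$. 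No lower bound on $M_B$ and no Grönwall estimate enter at this stage.

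The same identity is what settles your point (ii), which as written is also insufficient: convergence of $d\BP_tf$ to $df$ uniformly on compact sets does not yield convergence in $L^2(\mu)$ on a non-compact $M$ without some domination. The paper writes $\int_M|F(\cdot,t)-F(\cdot,s)|^2_B\,d\mu=-\int_M(\BP_tf-\BP_sf)\,L(\BP_tf-\BP_sf)\,d\mu$ and bounds this by spectral calculus for the self-adjoint $L$, with a separate short computation for right-continuity at $t=0$. Your closing remark is correct and matches the paper: $\BQ^B_t$ is used here as the $L^2(B,\mu)$ semi-group generated by the essentially self-adjoint $L^{W,B}$, so no stochastic representation \eqref{E4.7} is needed for this theorem.
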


\begin{proof}
The main argument is to prove that $F(\cdot,t) = (B^*)^{-1} d\BP_tf$ is a solution of the previous $L^2$ Cauchy problem with initial condition $G=(B^*)^{-1} df$. First, $G$ is in $L^2(B,\mu)$ since $f$ is compactly supported. For all $t>0$, we have:
\begin{align*}
\int_M{|F(\cdot,t)|^2_B\, d\mu}
&=\int_M{|(B^*)^{-1} d\BP_tf|^2_B\, d\mu}\\
&=\int_M{|d\BP_tf|^2\, d\mu}\\
&=-\int_M{\BP_tf L\BP_tf\, d\mu},\\
\end{align*}
which is finite since $\BP_tf\in\DD(L)\subset L^2(\mu)$. So $F(\cdot,t)$ is in $L^2(B,\mu)$ for every $t>0$. Besides, the $L^2$ continuity is proven by the same calculation, since for every $t,s\geq0$,
\begin{equation}
\int_M{|F(\cdot,t)-F(\cdot,s)|^2_B\, d\mu}= -\int_M{(\BP_tf-\BP_sf) L(\BP_tf-\BP_sf)\, d\mu}.
\label{E6.5}
\end{equation}
By spectral theorem, this is upper bounded by $(\sup_{x\in\R_+}|\sqrt{x}(e^{-tx}-e^{-sx})|)^2\|f\|^2_2$ which tends to zero as $s$ tends to $t>0$ (see \cite{RS} for more details on spectral theorem). For the right-continuity in $t=0$, we use that 
\begin{equation}
\int_M{(\BP_sf-f) L(\BP_sf-f)\, d\mu} = \int_0^s\int_0^s\int_M{\BP_tLf\BP_uL^2f\, d\mu\, du\, dt}.
\end{equation}
Finally, the commutation property \eqref{E4.12}, yields
$$\partial_tF = (B^*)^{-1} dL\BP_tf = L^{W,B}\left((B^*)^{-1} d \BP_tf\right) = L^{W,B}F.$$
The result follow by the uniqueness of the solution of the Cauchy problem.
\end{proof}

With this intertwining relation, we are now able to prove some functional inequalities. We get back to the assumptions of ergodicity and finite measure from Proposition \ref{P2.3}. We have an integral representation of the covariance, using the semigroup $\BQ^B$ instead of $\BQ$: for all $f,g\in\CC_c^{\infty}(M)$, 
\begin{equation}
\Cov_\mu(f,g) = \int_0^{+\infty}\left(\int_M\left\langle (B^*)^{-1}df, \BQ_t^B((B^*)^{-1} dg)\right\rangle_B\, d\mu\right)\,dt .
\label{E6.6}
\end{equation}

The main application of this covariance's representation is a generalization of an inequality due to Brascamp and Lieb, in \cite{BL}, known as Brascamp-Lieb inequality. 

\begin{theo}[Generalized Brascamp-Lieb inequality - symmetric case]\label{P6.3}
Assume that $\BB=0$ and that $M_B$ is positive definite, then for every $f\in\CC^\infty_c(M)$, we have :
\begin{equation*}
\Var_\mu(f)\leq \int_M{\langle df,\left((B^*M_B(B^*)^{-1}\right)^{-1} df\rangle\, d\mu}.
\end{equation*}
\end{theo}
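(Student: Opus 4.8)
The plan is to derive the generalized Brascamp–Lieb inequality directly from the covariance representation \eqref{E6.6} together with the decay estimate for the semi-group $\BQ^B$ coming from the assumption that $M_B$ is positive definite. First I would specialize \eqref{E6.6} to $g=f$, which gives
$$\Var_\mu(f) = \int_0^{+\infty}\left(\int_M\left\langle (B^{-1})^*df, \BQ_t^B\big((B^{-1})^* df\big)\right\rangle_B\, d\mu\right)\,dt.$$
So the task reduces to bounding the time integral of $\langle \alpha, \BQ_t^B\alpha\rangle_B$ integrated against $\mu$, where $\alpha = (B^{-1})^*df$.

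Next I would establish the key estimate: since $\BB=0$, Proposition \ref{P5.1} gives that $-L^\sslash_B$ is symmetric and non-negative for $\langle\cdot,\cdot\rangle_B$, and $M_B$ is symmetric; writing $L^{W,B} = L^\sslash_B - M_B$ as in \eqref{E4.11}, the operator $-L^{W,B} = -L^\sslash_B + M_B$ is bounded below by $M_B$ in the quadratic form sense on $L^2(B,\mu)$. If $\rho_B>0$ denotes the infimum of the smallest eigenvalue of $B^*M_B(B^*)^{-1}$ (equivalently the smallest eigenvalue of $M_B$ relative to $\langle\cdot,\cdot\rangle_B$), then $\langle \beta, M_B\beta\rangle_B \geq \rho_B |\beta|_B^2$, hence $-L^{W,B}\geq \rho_B\,\mathrm{Id}$ as quadratic forms. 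By the spectral theorem for the self-adjoint operator $L^{W,B}$ (essential self-adjointness was established earlier), this yields the contraction bound $\|\BQ_t^B\beta\|_{L^2(B,\mu)} \leq e^{-\rho_B t}\|\beta\|_{L^2(B,\mu)}$. Applying Cauchy–Schwarz in $L^2(B,\mu)$ to the inner integral in \eqref{E6.6} with $g=f$ then gives
$$\Var_\mu(f) \leq \int_0^{+\infty} e^{-\rho_B t}\,\|(B^{-1})^*df\|_{L^2(B,\mu)}^2\,dt = \frac{1}{\rho_B}\int_M |(B^{-1})^*df|_B^2\,d\mu.$$
Since $|(B^{-1})^*df|_B^2 = \langle B^*(B^{-1})^*df, B^*(B^{-1})^*df\rangle = |df|^2$, this is the ``scalar'' Brascamp–Lieb bound $\Var_\mu(f)\le \rho_B^{-1}\int|df|^2\,d\mu$, which is not yet the stated inequality.

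To reach the sharp pointwise form $\Var_\mu(f)\leq \int_M \langle df, B^*M_B(B^*)^{-1}df\rangle\,d\mu$, I would instead use the operator inequality more carefully, not passing through the scalar constant $\rho_B$. The cleanest route is a direct differential argument on $u(t):=\int_M \langle (B^{-1})^*df, \BQ_t^B(B^{-1})^*df\rangle_B\,d\mu = \int_M|d\BP_tf|^2\,d\mu$ (using the intertwining of Theorem \ref{P6.2}). Then $u'(t) = 2\int_M \langle d\BP_tf, dL\BP_tf\rangle\,d\mu = 2\int_M \langle (B^{-1})^*d\BP_tf, L^{W,B}(B^{-1})^*d\BP_tf\rangle_B\,d\mu \le -2\int_M \langle (B^{-1})^*d\BP_tf, M_B(B^{-1})^*d\BP_tf\rangle_B\,d\mu$, since $-L^\sslash_B\ge 0$. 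Rewriting the right side via $\langle \beta, M_B\beta\rangle_B = \langle B^*\beta, B^*M_B\beta\rangle = \langle B^*\beta, B^*M_B(B^*)^{-1}(B^*\beta)\rangle$ with $B^*(B^{-1})^*d\BP_tf = d\BP_tf$, we get $u'(t)\le -2\int_M\langle d\BP_tf, B^*M_B(B^*)^{-1}d\BP_tf\rangle\,d\mu$. Here I would want the monotonicity: the integrand $\int_M\langle d\BP_tf, B^*M_B(B^*)^{-1}d\BP_tf\rangle\,d\mu$ should be controlled by its value at $t=0$; this is where positive-definiteness of $M_B$ and the contraction property must be combined — the map $t\mapsto \BP_tf$ being such that $\|d\BP_tf\|$ decreases, and $B^*M_B(B^*)^{-1}$ being a fixed positive symmetric endomorphism field, one compares $\int\langle d\BP_tf, B^*M_B(B^*)^{-1}d\BP_tf\rangle$ to $u(t)$ via $\rho_B$. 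Integrating $u'(t)\le -2\rho_B^{-1}\cdot\big(\text{something}\big)$ is delicate; the honest approach is: from $u'(t)\le -2\int_M\langle d\BP_tf, B^*M_B(B^*)^{-1}d\BP_tf\rangle\,d\mu$ and the fact that $B^*M_B(B^*)^{-1}\ge \rho_B$ forces $\int_M\langle d\BP_tf, B^*M_B(B^*)^{-1}d\BP_tf\rangle\,d\mu \ge \rho_B u(t)$, we again only get the scalar bound. The genuinely sharp inequality instead follows by choosing, in the covariance representation, to pair $df$ against $\BQ_t^B$ applied not to $(B^{-1})^*df$ but — after an integration by parts moving one $\BQ_t^B$ — exploiting self-adjointness to write $\Cov_\mu(f,f) = \int_0^\infty \int_M \langle (B^{-1})^*df, \BQ_t^B(B^{-1})^*df\rangle_B$ and then using $\int_0^\infty \BQ_t^B\,dt = (-L^{W,B})^{-1}$ on the relevant subspace, with $(-L^{W,B})^{-1}\le M_B^{-1}$ as quadratic forms (because $-L^{W,B} = -L^\sslash_B + M_B \ge M_B$). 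This operator-monotonicity of the inverse is the crux:
$$\Var_\mu(f) = \big\langle (B^{-1})^*df,\ (-L^{W,B})^{-1}(B^{-1})^*df\big\rangle_{L^2(B,\mu)} \le \big\langle (B^{-1})^*df,\ M_B^{-1}(B^{-1})^*df\big\rangle_{L^2(B,\mu)},$$
and the right-hand side equals $\int_M \langle B^*(B^{-1})^*df,\ B^*M_B^{-1}(B^*)^{-1}B^*(B^{-1})^*df\rangle\,d\mu = \int_M\langle df,\ B^*M_B^{-1}(B^*)^{-1}df\rangle\,d\mu$.

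I expect the main obstacle to be making the inverse-operator argument rigorous: one must justify that $(-L^{W,B})^{-1}$ is well-defined on the closure of the twisted-gradient subspace (where $-L^{W,B}$ is strictly positive by the positive-definiteness of $M_B$, via $-L^{W,B}\ge M_B \ge \rho_B>0$), that the integral $\int_0^\infty \BQ_t^B\,dt$ converges strongly there to this inverse (using the $e^{-\rho_B t}$ decay to dominate), and that the operator inequality $-L^{W,B}\ge M_B$ legitimately transfers to $(-L^{W,B})^{-1}\le M_B^{-1}$ — this last step is operator monotonicity of $x\mapsto -x^{-1}$, valid for positive self-adjoint operators, but one must check the forms are comparable on a common core, namely the smooth compactly supported twisted gradients, which is exactly the subspace identified in Section \ref{Section5}. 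The continuity and domain facts needed are precisely those already verified in the proof of Theorem \ref{P6.2}, so I would cite them; the remaining work is the spectral-theoretic comparison, which is routine once the right subspace and the decay estimate are in hand.
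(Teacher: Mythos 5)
Your second, ``honest'' argument is in substance the paper's own proof: the covariance representation \eqref{E6.6} with $g=f$, the identification $\int_0^\infty\BQ_t^B\,dt=(-L^{W,B})^{-1}$ on the relevant subspace, and the comparison of inverses $(-L^{W,B})^{-1}=(-L^\sslash_B+M_B)^{-1}\le M_B^{-1}$, which the paper isolates as Lemma \ref{P6.4} (proved by an elementary algebraic identity rather than by invoking operator monotonicity, but this is the same fact). The long detour through the scalar bound $\rho_B^{-1}\int|df|^2\,d\mu$ is harmless but unnecessary.

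There is, however, a genuine gap: you justify invertibility of $-L^{W,B}$ and the convergence of $\int_0^\infty\BQ_t^B\,dt$ by writing ``$-L^{W,B}\ge M_B\ge\rho_B>0$, by the positive-definiteness of $M_B$''. The theorem only assumes that $M_B(x)$ is positive definite at every point; on a non-compact manifold this does \emph{not} imply $\rho_B>0$ --- the pointwise smallest eigenvalue can tend to $0$ at infinity, in which case $0$ may belong to the spectrum of $-L^{W,B}$, the decay estimate $e^{-\rho_B t}$ is useless, the integral $\int_0^\infty\BQ_t^B\,dt$ need not converge, and $(-L^{W,B})^{-1}$ need not exist. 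Your argument therefore only proves the theorem under the stronger hypothesis $\rho_B>0$. The paper closes exactly this case by a resolvent regularization: one replaces $-L^{W,B}$ by $\varepsilon\,\mathrm{id}-L^{W,B}$ and $-L$ by $\varepsilon\,\mathrm{id}-L$ (on centred functions), writes
\begin{equation*}
\Var_\mu(f)=\varepsilon\int_M f g_\varepsilon\,d\mu+\int_M\left\langle (B^*)^{-1}df,\,(\varepsilon\,\mathrm{id}-L^{W,B})^{-1}\left((B^*)^{-1}df\right)\right\rangle_B d\mu,
\qquad g_\varepsilon=\int_0^{+\infty}e^{-\varepsilon t}\BP_tf\,dt,
\end{equation*}
applies the inverse-comparison lemma with $C=\varepsilon\,\mathrm{id}-L^\sslash_B$ and $D=M_B$, and then shows $\varepsilon\int_M f g_\varepsilon\,d\mu\to0$ as $\varepsilon\to0$ using ergodicity and dominated convergence. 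To make your proof cover the stated hypothesis you need to add this (or an equivalent) limiting argument; as written, the claim $M_B\ge\rho_B>0$ is the step that fails.
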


Firstly, we need a little lemma.
\begin{lemme}\label{P6.4}
Let $C$ and $D$ be symmetric non-negative operators such that~$D$ and~$C+D$ are invertible. Then we have
$$0\leq D^{-1} -(C+D)^{-1}.$$
\end{lemme}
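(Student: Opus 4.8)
The plan is to prove the operator inequality $D^{-1} - (C+D)^{-1} \geq 0$ by a direct algebraic manipulation, reducing everything to the non-negativity of a single conjugated operator. First I would observe that since $D$ is symmetric non-negative and invertible, it has a symmetric positive-definite square root $D^{1/2}$, which is also invertible with inverse $D^{-1/2}$. The idea is to factor out $D^{-1/2}$ on both sides: it suffices to show that
\begin{equation*}
D^{1/2}\left(D^{-1} - (C+D)^{-1}\right)D^{1/2} \geq 0,
\end{equation*}
since conjugation by an invertible symmetric operator preserves the sign of a symmetric operator (if $\langle S v, v\rangle \geq 0$ for all $v$, then $\langle A^* S A w, w\rangle = \langle S(Aw), Aw\rangle \geq 0$).

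Next I would compute the conjugated expression. We have $D^{1/2} D^{-1} D^{1/2} = \id$, so the claim reduces to $\id - D^{1/2}(C+D)^{-1}D^{1/2} \geq 0$, i.e. $D^{1/2}(C+D)^{-1}D^{1/2} \leq \id$. Writing $C+D = D^{1/2}(\id + D^{-1/2}CD^{-1/2})D^{1/2}$, we get
\begin{equation*}
D^{1/2}(C+D)^{-1}D^{1/2} = \left(\id + D^{-1/2}CD^{-1/2}\right)^{-1}.
\end{equation*}
Now set $\tilde C = D^{-1/2}CD^{-1/2}$, which is symmetric non-negative (again by conjugation of $C\geq 0$). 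So the whole statement reduces to the elementary scalar-type fact that for a symmetric non-negative operator $\tilde C$, one has $(\id + \tilde C)^{-1} \leq \id$. This follows immediately from the spectral theorem: the eigenvalues of $\tilde C$ are $\geq 0$, hence the eigenvalues of $(\id+\tilde C)^{-1}$ lie in $(0,1]$, so $\id - (\id+\tilde C)^{-1} \geq 0$. Tracing back through the conjugations gives the claim.

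I do not anticipate a serious obstacle here, as the statement is finite-dimensional linear algebra (or, if one works fibrewise on $T^*_xM$, pointwise linear algebra) and the argument is the standard "sandwich by $D^{1/2}$" trick. The only points requiring a word of care are: that $D^{1/2}$ exists and is invertible (guaranteed since $D$ is symmetric non-negative \emph{and} invertible, so its spectrum is bounded away from $0$); that $C+D$ is invertible so $(C+D)^{-1}$ makes sense (this is a hypothesis); and that conjugation by an invertible symmetric map preserves the non-negativity of symmetric operators. An alternative, essentially equivalent route would be to use the resolvent identity $D^{-1} - (C+D)^{-1} = D^{-1}C(C+D)^{-1}$ and symmetrize, but the $D^{1/2}$-conjugation argument is cleaner and avoids worrying about whether the product of two non-negative operators is non-negative.
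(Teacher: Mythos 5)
Your proof is correct, but it takes a genuinely different route from the paper. You conjugate by the positive square root $D^{1/2}$ and reduce the claim to $(\id+\tilde C)^{-1}\leq \id$ for $\tilde C=D^{-1/2}CD^{-1/2}\geq 0$, which is immediate from the spectral theorem. The paper instead uses exactly the identity you mention in your last sentence as the alternative: it writes $D^{-1}-(C+D)^{-1}=(C+D)^{-1}CD^{-1}$ and handles the symmetrization worry by the substitution $\beta=(C+D)^{-1}\alpha$, so that $\langle (C+D)^{-1}CD^{-1}\alpha,\alpha\rangle=\langle CD^{-1}(C+D)\beta,\beta\rangle=\langle D^{-1}C\beta,C\beta\rangle+\langle C\beta,\beta\rangle\geq 0$; this is purely algebraic and never invokes a square root. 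The trade-off is mild: your argument needs the functional calculus to produce $D^{1/2}$ (harmless here, and it works for bounded non-negative operators on a Hilbert space, not just matrices), while the paper's argument needs nothing beyond the given operators but relies on the slightly less transparent substitution trick. One small caveat on your closing remark: in the paper the lemma is applied with $C=-L^{\sslash}_B$ and $D=M_B$ acting on $L^2(B,\mu)$, so it is not merely fibrewise linear algebra on $T_x^*M$; both your proof and the paper's are written at the level of quadratic-form positivity and survive this, but the "finite-dimensional" framing should be dropped.
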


\begin{proof}
We have:
\begin{align*}
D^{-1}-(C+D)^{-1} = (C+D)^{-1}CD^{-1},
\end{align*}
and we have 
\begin{align*}
\langle (C+D)^{-1}CD^{-1}\alpha,\alpha\rangle = \langle CD^{-1}\alpha,(C+D)^{-1}\alpha\rangle .
\end{align*}
Letting $(C+D)^{-1}\alpha=\beta$ this rewrites as 
\begin{align*}
\langle CD^{-1}(C+D)\beta, \beta\rangle&=\langle CD^{-1} C\beta,\beta\rangle +\langle C\beta,\beta\rangle\\
&=\langle D^{-1} C\beta,C\beta\rangle +\langle C\beta,\beta\rangle\geq 0,
\end{align*}
since $D^{-1}$ and $C$ are non-negative.
\end{proof}

\begin{proof}[Proof of Theorem \ref{P6.3}]
First, let assume that $\rho_B$ is positive. This implies that for all $1$-form $\alpha$, we have
\begin{equation}
\int_M{\langle (-L^{W,B})\alpha,\alpha\rangle_B\, d\mu}\geq \rho_B\int_M|\alpha|^2_B\, d\mu.
\label{E6.7}
\end{equation}
So $-L^{W,B}$ is essentially self-adjoint and bounded from below by $\rho_B \id$. Then it is invertible in $L^2(B,\mu)$ i.e given any smooth compactly supported $1$-form $\alpha$, the Poisson equation $-L^{W,B} \beta=\alpha$ admits a unique solution $\beta$ in the domain of $L^{W,B}$ which has the following integral representation:
\begin{equation}
\beta = \int_0^{+\infty}{\BQ_t^B(\alpha)\, dt} = (-L^{W,B})^{-1}\alpha .
\label{E6.8}
\end{equation}
Using the variance representation formula \eqref{E6.6}, we have 
\begin{align*}
\Var_\mu(f)
&=\int_0^\infty \left(\int_M \left\langle (B^*)^{-1}df, \BQ_t^{B}\left((B^*)^{-1}df\right)\right\rangle_B \,d\mu\right)\,dt\\
&=\int_M \left\langle (B^*)^{-1}df, \int_0^\infty \BQ_t^{B}\left((B^*)^{-1}df\right)\,dt\right\rangle_B \,d\mu\\
&=\int_M \left\langle (B^*)^{-1}df, (-L^{W,B})^{-1}\left((B^*)^{-1}df\right)\right\rangle_B \,d\mu\\
&=\int_M \left\langle (B^*)^{-1}df, (-L_B^\sslash+M_B)^{-1}\left((B^*)^{-1}df\right)\right\rangle_B \,d\mu\\
\end{align*}

Using Lemma \ref{P6.4} to $C=-L_B^\sslash$ and $D=M_B$, we have:
\begin{align*}
\Var_\mu(f)
&\leq\int_M\left\langle (B^*)^{-1}df, M_B^{-1}\left((B^*)^{-1}df\right)\right\rangle_B \,d\mu\\
&\leq\int_M\left\langle df, \left(B^* M_B (B^*)^{-1}\right)^{-1}df\right\rangle \, d\mu
\end{align*}

Now, when the operator $M_B$ is not uniformly bounded from below by a positive constant, we need to regularize. For all $\varepsilon>0$, the operator $\varepsilon \id -L^{W,B}$ is invertible and we have the following integral representation for all $1$-form $\alpha$:
\begin{equation}
(\varepsilon \id - L^{W,B})^{-1}\alpha = \int_0^{+\infty}{e^{-\varepsilon t} \BQ_t^B\alpha\, dt}.
\label{E6.9}
\end{equation}
Similarly, $(\varepsilon \id-L)$ is invertible on the sub-space of centred functions and we have the integral representation for all centred $f\in\CC^\infty_c(M)$:
\begin{equation}
(\varepsilon \id - L)^{-1}f = \int_0^{+\infty}{e^{-\varepsilon t} \BP_tf\, dt} := g_\varepsilon.
\label{E6.10}
\end{equation}
We have:
\begin{align*}
\Var_\mu(f) 
&= \int_M{f^2\, d\mu}\\
&= \int_M{ f (\varepsilon \id-L)g_\varepsilon\, d\mu}\\
&= \varepsilon\int_M{fg_\varepsilon\, d\mu} + \int_M{f(-L)\left(\int_0^{+\infty}{e^{-\varepsilon t}\BP_tf\, dt}\right)\, d\mu}\\
&= \varepsilon\int_M{fg_\varepsilon\, d\mu} + \int_0^{+\infty}{e^{-\varepsilon t}\int_M{f(-L)\BP_tf\, d\mu}\, dt}\\
&= \varepsilon\int_M{fg_\varepsilon\, d\mu} + \int_0^{+\infty}{e^{-\varepsilon t}\int_M{\left\langle (B^*)^{-1}df,\BQ^B_t((B^*)^{-1} df)\right\rangle_B\, d\mu}\, dt}\\
&= \varepsilon\int_M{fg_\varepsilon\, d\mu} + \int_M{\left\langle (B^*)^{-1}df,\int_0^{+\infty}{e^{-\varepsilon t}\BQ^B_t((B^*)^{-1} df)\, dt}\right\rangle_B\, d\mu}\\
&= \varepsilon\int_M{fg_\varepsilon\, d\mu} + \int_M{\left\langle (B^*)^{-1}df,(\varepsilon \id-L^{W,B})^{-1}((B^*)^{-1} df)\right\rangle_B\, d\mu}.\\
\end{align*}

We can apply Lemma \ref{P6.4} to $\varepsilon \id-L^{W,B} = \varepsilon \id-L^\sslash_B + M_B$. We have:
\begin{equation*}
\Var_\mu(f) \leq \varepsilon\|f\|_{L^2(\mu)}\|g_\varepsilon\|_{L^2(\mu)} + \int_M{\left\langle (B^*)^{-1}df,(M_B)^{-1}((B^*)^{-1} df)\right\rangle_B\, d\mu}.
\end{equation*}
Finally, we have
\begin{equation*}
\varepsilon\|g_\varepsilon\|_{L^2(\mu)} = \left\|\int_0^{+\infty}e^{-t}\BP_{t/\varepsilon}f\, dt\right\|_{L^2(\mu)}\leq \int_0^{+\infty}\int_M e^{-t}(\BP_{t/\varepsilon}f)^2\, d\mu\, dt .
\end{equation*}
By ergodicity of $\BP$ and dominated convergence, this term converges to $0$ as $\varepsilon \to 0$. This ends the proof.
\end{proof}

An immediate corollary of this theorem is the Poincaré inequality.
\begin{theo}[Poincaré inequality - symmetric case]\label{P6.5}
Assuming that $\BB=0$ and that $\rho_B$ is positive, for all $f\in\CC_c^\infty(M)$, we have
\begin{equation*}
\Var_\mu(f)\leq \frac{1}{\rho_B} \int_M{| df|^2\, d\mu},
\end{equation*}
\end{theo}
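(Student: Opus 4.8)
The plan is to derive Theorem~\ref{P6.5} directly from the generalized Brascamp-Lieb inequality of Theorem~\ref{P6.3} by showing that, under the hypothesis $\rho_B>0$, the operator $B^*M_B(B^*)^{-1}$ is bounded above by $\rho_B^{-1}\id$ as a quadratic form. First I would note that when $\BB=0$ and $\rho_B>0$, the operator $M_B$ is symmetric with respect to $\langle\cdot,\cdot\rangle_B$ (as recalled at the start of Section~\ref{Section6}), and by the definition \eqref{E6.1} of $\rho_B$, the symmetric (with respect to the Riemannian metric) operator $B^*M_B(B^*)^{-1}$ has all eigenvalues $\geq \rho_B>0$ at every point $x\in M$. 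In particular $M_B$ is positive definite, so Theorem~\ref{P6.3} applies.

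Next I would compare $(B^*M_B(B^*)^{-1})^{-1}$ with $\rho_B^{-1}\id$. Since $B^*M_B(B^*)^{-1}$ is a symmetric positive operator on $T^\ast_xM$ whose smallest eigenvalue is at least $\rho_B$, its inverse is a symmetric positive operator whose largest eigenvalue is at most $\rho_B^{-1}$; hence for every $1$-form $df$ and every $x$,
\begin{equation*}
\langle df, (B^*M_B(B^*)^{-1})^{-1} df\rangle \leq \frac{1}{\rho_B}|df|^2.
\end{equation*}
Integrating this pointwise inequality against $\mu$ and inserting it into the conclusion of Theorem~\ref{P6.3} gives
\begin{equation*}
\Var_\mu(f)\leq \int_M\langle df, (B^*M_B(B^*)^{-1})^{-1} df\rangle\, d\mu \leq \frac{1}{\rho_B}\int_M |df|^2\, d\mu,
\end{equation*}
which is exactly the claimed Poincaré inequality.

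I do not anticipate a genuine obstacle here: the only point that needs a word of care is the spectral comparison, namely that "smallest eigenvalue $\geq \rho_B$" for the symmetric operator $B^*M_B(B^*)^{-1}$ passes to "largest eigenvalue of the inverse $\leq \rho_B^{-1}$", which is immediate by diagonalizing $B^*M_B(B^*)^{-1}$ in an orthonormal basis of $T^\ast_xM$ for the Riemannian metric. One should also remark that $\rho_B>0$ forces $\mu$ to be finite (so that $\Var_\mu$ makes sense), exactly as in the discussion following Theorem~\ref{P3.3}; up to renormalization we take $\mu$ to be a probability measure. Everything else is a direct substitution into Theorem~\ref{P6.3}.
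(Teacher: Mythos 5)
Your proof is correct, and it is exactly the route the paper alludes to when it calls Theorem~\ref{P6.5} ``an immediate corollary'' of Theorem~\ref{P6.3}: since $M_B$ is symmetric for $\langle\cdot,\cdot\rangle_B$ when $\BB=0$, the operator $B^*M_B(B^*)^{-1}$ is symmetric for the Riemannian metric, its smallest eigenvalue is everywhere at least $\rho_B>0$ by \eqref{E6.1}, so $M_B$ is positive definite, Theorem~\ref{P6.3} applies (read with the inverse $\left(B^*M_B(B^*)^{-1}\right)^{-1}$, as in its proof --- the displayed statement of Theorem~\ref{P6.3} has a typo), and the pointwise spectral bound $\langle df,(B^*M_B(B^*)^{-1})^{-1}df\rangle\le \rho_B^{-1}|df|^2$ finishes the argument. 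The paper, however, writes out a genuinely different proof: it represents the variance as $2\int_0^\infty\phi(t)\,dt$ with $\phi(t)=\int_M|\BQ_t^B((B^*)^{-1}df)|_B^2\,d\mu$ (a time change plus the symmetry of $\BQ^B$), differentiates $\phi$, uses the integration by parts of Proposition~\ref{P5.1} together with the lower bound by $\rho_B$ to get $\phi'\le-2\rho_B\phi$, and concludes by Gr\"onwall and integration in $t$. The advantage of that route, stated explicitly in the paper, is that it avoids the inversion of $L^{W,B}$, its integral representation \eqref{E6.8} and the $\varepsilon$-regularization underlying Theorem~\ref{P6.3}, and it degrades gracefully to $\rho_B=0$; your route is shorter but inherits all of that machinery through Theorem~\ref{P6.3}, which is legitimate since $\rho_B>0$ guarantees its hypotheses (including $M_B$ uniformly bounded below, hence the intertwining of Theorem~\ref{P6.2} and the representation \eqref{E6.6}). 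One small inaccuracy in your closing remark: the finiteness of $\mu$ does not follow ``exactly as in the discussion following Theorem~\ref{P3.3}'', since Bakry's result quoted there concerns $\rho>0$ for $\MM$, not $\rho_B>0$ for the twisted potential $M_B$; this is harmless here, because the paper's standing assumption of an ergodic, reversible diffusion with invariant probability measure $\mu$ is what is actually used in \eqref{E6.6}.
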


In the case where $M_B$ is only positive and not uniformly bounded from below (i.e $\rho_B=0$), this inequality is trivially true. Let us give an alternative proof which does not use the generalized Brascamp-Lieb inequality, and thus, avoids the inversion of $L^{W,B}$ and its integral representation.

\begin{proof}
Using a time change and the symmetry of the semigroup $\BQ^B$, we have a new representation of the variance:
\begin{align*}
\Var_\mu(f)
&=2\int_0^\infty\left(\int_M \left\langle (B^*)^{-1} df,  \BQ_{2t}^{B}((B^*)^{-1} df)\right\rangle_B \,d\mu\right)\,dt\\
&=2\int_0^\infty\left(\int_M \left|\BQ_{t}^{B}((B^*)^{-1} df)\right|^2_B \,d\mu\right)\,dt.
\end{align*}

Let 
\begin{equation}
\label{E6.11}
\phi(t)= \int_M \left|\BQ_{t}^{B}((B^*)^{-1} df)\right|^2_B \,d\mu.
\end{equation}

We have 
\begin{align*}
\phi'(t)
&=2\int_M\left\langle \BQ_{t}^{B}((B^*)^{-1} df), L^{W,B} \BQ_{t}^{B}((B^*)^{-1} df) \right\rangle_B\,d\mu\\
&=2\int_M\left\langle \BQ_{t}^{B}((B^*)^{-1} df), (L^\sslash_B) \BQ_{t}^{B}((B^*)^{-1} df) \right\rangle_B\,d\mu\\
&\ \ -2\int_M\left\langle \BQ_{t}^{B}((B^*)^{-1} df), M_B \BQ_{t}^{B}((B^*)^{-1} df) \right\rangle_B\,d\mu\\
&=-2\int_M\left|\nabla\BQ_{t}^{B}((B^*)^{-1} df)\right|^2_B\,d\mu\\
&\ \  -2\int_M\left\langle \BQ_{t}^{B}((B^*)^{-1} df), M_B \BQ_{t}^{B}((B^*)^{-1} df) \right\rangle_B\,d\mu\\
&\leq -2\int_M\left\langle \BQ_{t}^{B}((B^*)^{-1} df), M_B \BQ_{t}^{B}((B^*)^{-1} df) \right\rangle_B\,d\mu\\
&\leq -2  \rho_B\phi(t)
\end{align*}

By Grönwall lemma, this implies
\begin{equation}
\label{E6.12}
\phi(t) \le e^{-2\rho_Bt} \phi(0)=e^{-2\rho_Bt}\int_M|df|^2\,d\mu.
\end{equation}

Integrating on $\R_+$ ends the proof.
\end{proof}

We finish with an interpretation of the Poincaré inequality in terms of spectral gap. 

\begin{prop}\label{P6.6}
Assume that $\BB=0$ and that $\rho_B$ is positive then the spectral gap satisfies 
\begin{equation}
\lambda_1(-L,\mu)\geq\rho_B
\end{equation}
\end{prop}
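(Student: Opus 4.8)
The plan is to deduce Proposition \ref{P6.6} directly from the Poincaré inequality of Theorem \ref{P6.5}, using the standard variational characterization of the spectral gap of a symmetric diffusion operator. Recall that $L$ is symmetric on $\CC_c^\infty(M)$ with respect to $\mu$, with Dirichlet form $\EE(f,f) = \int_M |df|^2\, d\mu$ by \eqref{E2.9}, and that $\mu$ is finite (indeed, $\rho_B > 0$ forces $\mu$ to be finite, just as $\rho > 0$ did in Section \ref{Section3}), so after normalization we may take $\mu$ to be a probability measure. The spectral gap $\lambda_1(-L,\mu)$ is by definition
\begin{equation*}
\lambda_1(-L,\mu) = \inf\left\{ \frac{\int_M |df|^2\, d\mu}{\Var_\mu(f)} \;:\; f \in \CC_c^\infty(M),\ \Var_\mu(f) \neq 0 \right\}.
\end{equation*}

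First I would invoke Theorem \ref{P6.5}: under the hypotheses $\BB = 0$ and $\rho_B > 0$, every $f \in \CC_c^\infty(M)$ satisfies $\Var_\mu(f) \leq \frac{1}{\rho_B}\int_M |df|^2\, d\mu$. Rearranging, for any $f$ with $\Var_\mu(f) \neq 0$ we get $\int_M |df|^2\, d\mu \big/ \Var_\mu(f) \geq \rho_B$. Taking the infimum over all such $f$ yields $\lambda_1(-L,\mu) \geq \rho_B$, which is exactly the claim. This is essentially a one-line argument once the Poincaré inequality is in hand.

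The only point requiring a little care is whether the infimum over $\CC_c^\infty(M)$ genuinely computes the spectral gap of the self-adjoint extension of $-L$, i.e. that $\CC_c^\infty(M)$ is a form core; this is standard for the completed Dirichlet form associated to $L$ on a complete manifold, and I would simply cite it (or note that the inequality on the core passes to the closure by density, which is all that is needed for the bound on the bottom of the spectrum above the constants). There is no real obstacle here: the content of the proposition is entirely carried by Theorem \ref{P6.5}, and this final statement is merely its spectral-theoretic reformulation. Thus the proof is a short remark rather than a new argument.
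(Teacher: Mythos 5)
Your proof is correct and is exactly what the paper intends: the paper states Proposition \ref{P6.6} as ``an interpretation of the Poincaré inequality in terms of spectral gap,'' i.e.\ it is deduced from Theorem \ref{P6.5} via the variational characterization of $\lambda_1(-L,\mu)$, just as you do. Your added remark on $\CC_c^\infty(M)$ being a form core is a reasonable point of care but does not change the argument.
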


This is a generalization to Riemannian manifolds of the Chen and Wang formula established in the one dimensional case in \cite{CW}. This spectral gap gives an exponential rate of convergence to equilibrium to the ergodic semigroup $\BP$.
%%%%%%%%%%%%%%%%%%%%%%%%%%%%%%%%%%%%%%%%%%%%%%%%%%%%%%%%%%%%%%%%%%%%%%%%%%%%
\section{Intertwining: general case}\label{Section7}
\setcounter{equation}0
%%%%%%%%%%%%%%%%%%%%%%%%%%%%%%%%%%%%%%%%%%%%%%%%%%%%%%%%%%%%%%%%%%%%%%%%%%%%
The goal of this section is to extend the results of Section \ref{Section6} without the strong condition of Proposition \ref{P5.1}. These results are more theoretical because we will not apply it to any example but they show that our twisting method is strong to perturbations and could be applied to a more general class of twist than the class of Proposition \ref{P5.2}. Actually, we can release all assumptions on the second order operator if we are ready to strengthen the conditions on the potential $M_B$. In this case, the eigenvalue $\rho_B$ is not a good criterion anymore. We need to find a quantity which offsets the lack of symmetry. For all $1$-form $\alpha$, according to \eqref{E5.5}, we have:
\begin{align*}
\int_M{\langle L^{\sslash}_B\alpha,\alpha\rangle_B\, d\mu} 
&= -\int_M{\left| B^*\nabla \alpha\right|^2\,d\mu} +\int_M{\left\langle B^*\nabla \alpha,\BB (B^*\alpha)\right\rangle\, d\mu}\\
&= -\int_M{\left| B^*\nabla \alpha-\frac{1}{2}\BB B^*\alpha\right|^2\,d\mu} +\int_M{\frac{1}{4}\left| \BB B^*\alpha\right|^2\, d\mu}\\
&\leq \int_M\left\langle B^* \alpha, N_B B^* \alpha\right\rangle\, d\mu
\end{align*}

with $\BB$ defined in \eqref{E5.6} and $N_B$ the section of $\End(T^*M)$ defined by
\begin{equation}
N_B(x) = \frac{1}{4} \BB(x)^t\cdot\BB(x) \in \End(T^*_xM). 
\label{E7.1}
\end{equation}

Hence, we have the following lower bound:
\begin{equation}
\int_M{\left\langle \left(-L^{W,B}\right)\alpha,\alpha\right\rangle_B\, d\mu}\geq \int_M{\left\langle B^* \alpha, \left[\left(B^* M_B(B^*)^{-1})\right)^{s} -N_B\right] B^*\alpha\right\rangle\, d\mu}.
\label{E7.3}
\end{equation}

where $\left(B^* M_B(B^*)^{-1})\right)^{s}$ is the symmetric part of $B^* M_B(B^*)^{-1}$ with respect to the Riemannian metric.
So the quantity we need to control seems to be the following:
\begin{equation}
\tilde{\rho}_B= \inf_{x\in M}\left\{\hbox{smallest eigenvalue of }\left(B^* M_B(B^*)^{-1})\right)^{s} -N_B \right\}.
\label{E7.4}
\end{equation}

First, as in the symmetric case, we show the intertwining relation.

\begin{theo}\label{P7.1}
Assume that $\left(B^* M_B(B^*)^{-1})\right)^{s} -(1+\varepsilon) N_B$ is bounded from below for some $\varepsilon>0$. Then the semigroups $\BP$ and $\BQ^B$ are intertwined by~$(B^*)^{-1} d$, i.e for every $f\in\CC^\infty_c(M)$ and $t\geq0$ we have :
\begin{equation*}
(B^{-1})^* d\BP_tf = \BQ_t^B\left((B^{-1})^* df\right).
\end{equation*}
\end{theo}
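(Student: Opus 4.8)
The plan is to follow the same strategy used for Theorem~\ref{P6.2}: show that $F(\cdot,t)=(B^{-1})^*d\BP_tf$ solves the $L^2$ Cauchy problem
$$\left\{\begin{aligned}
&\partial_t F = L^{W,B}F\\
&F(\cdot,0)=(B^{-1})^*df\in L^2(B,\mu)
\end{aligned}\right.$$
in $\CC^0(\R_+,L^2(B,\mu))$, and then prove that this Cauchy problem has a unique such solution, so that $F(\cdot,t)=\BQ_t^B((B^{-1})^*df)$. The first half is verbatim as in Theorem~\ref{P6.2}: the membership $F(\cdot,t)\in L^2(B,\mu)$ and the $L^2$-continuity in $t$ come from $\int_M|F(\cdot,t)|_B^2\,d\mu=-\int_M \BP_tf\,L\BP_tf\,d\mu$ together with the spectral theorem applied to the self-adjoint operator $-L$ on $L^2(\mu)$, and the equation $\partial_t F=L^{W,B}F$ is exactly the commutation relation \eqref{E4.12}. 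Nothing in this part used $\BB=0$, so it carries over unchanged; the whole burden is the uniqueness statement, which previously relied on the symmetry and non-negativity of $-L^\sslash_B$ (Proposition~\ref{P5.1}) that we no longer have.

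For uniqueness, by linearity I would take $G=0$ and show the only solution is $F\equiv0$. Set $F_\rho=e^{-\rho t}F$; the point of the hypothesis is that, after absorbing a constant into $\rho$, the estimate \eqref{E7.3} gives $\int_M\langle F,(-L^{W,B})F\rangle_B\,d\mu\geq \varepsilon\int_M|B^*\nabla F|^2\,d\mu=\varepsilon\int_M|\nabla F|_B^2\,d\mu$ up to lower-order terms — i.e. the ``missing'' gradient coercivity of $-L^\sslash_B$ is recovered from the surplus $(1+\varepsilon)N_B$ versus the $N_B$ that \eqref{E7.3} costs. Concretely I would run the localized energy computation of Proposition~\ref{P6.1}: for $\phi\in\CC^\infty_c(M)$ and $\tau>0$, compute $\int_0^\tau\int_M \phi^2\langle F,L^{W,B}F\rangle_B\,d\mu\,dt=\int_0^\tau\int_M\phi^2\tfrac12\partial_t|F|_B^2\,d\mu\,dt$, which is $\geq \tfrac12\int_M\phi^2|F(\cdot,\tau)|_B^2\,d\mu$. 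On the other side, I rewrite $\int_M\phi^2\langle F,L^{W,B}F\rangle_B\,d\mu=\int_M\langle \phi^2 F,L^\sslash_B F\rangle_B\,d\mu-\int_M\phi^2\langle F,M_B F\rangle_B\,d\mu$ and push the first term through the first-order identity \eqref{E5.5} with $\alpha=\phi^2F$, $\beta=F$: the completed-square manipulation that produced \eqref{E7.3} now produces, after the Cauchy--Schwarz splitting of the cross term $\langle B^*\nabla(\phi^2F),\ldots\rangle$ into a $\phi^2$-part and a $|\nabla\phi|^2$-part, an inequality of the shape
$$\tfrac12\int_M\phi^2|F(\cdot,\tau)|_B^2\,d\mu \leq -c\int_0^\tau\int_M\phi^2|\nabla F|_B^2\,d\mu\,dt + C\int_0^\tau\int_M|\nabla\phi|_2^2\,|F|_B^2\,d\mu\,dt$$
with $c>0$ thanks to the factor $\varepsilon$. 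Then a completeness cut-off sequence $(\phi_n)$ with $\phi_n\to1$, $\|\nabla\phi_n\|_\infty\to0$ kills the right-hand side and yields $\int_M|F(\cdot,\tau)|_B^2\,d\mu=0$ for all $\tau>0$, hence $F=0$.

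The main obstacle is bookkeeping the completed-square/Cauchy--Schwarz step at the \emph{localized} level: in \eqref{E7.3} one works globally with $\alpha$ itself, but here the test form is $\phi^2F$ while the ``energy'' being bounded is in terms of $\nabla F$, not $\nabla(\phi^2F)$, so when I expand $B^*\nabla(\phi^2F)=\phi^2 B^*\nabla F+2\phi\,\nabla\phi\otimes B^*F$ I must keep track of several cross terms — the $\BB$-term $\langle B^*\nabla(\phi^2F),\BB B^*F\rangle$, the $M_B$-term, and the genuine localization error — and choose the two Cauchy--Schwarz parameters so that the coefficient of $\int\phi^2|\nabla F|_B^2$ stays strictly negative. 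This is precisely where the quantitative gap between $(1+\varepsilon)N_B$ (hypothesis) and $N_B$ (the intrinsic cost in \eqref{E7.3}) is consumed: one parameter handles the $\BB$ completed square as in the derivation of \eqref{E7.3} absorbing an $N_B$, the other handles the $\nabla\phi$ cross term, and the leftover $\varepsilon N_B$ plus the coercive $-|B^*\nabla F-\tfrac12\BB B^*F|^2$ must dominate. The uniform lower bound in the hypothesis ensures the zeroth-order remainder is controlled by the same exponential shift $e^{-\rho t}$ used at the start. I do not expect the verification that $F$ is a solution to present any difficulty beyond what is already in Theorem~\ref{P6.2}.
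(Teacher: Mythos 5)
Your proposal is correct and follows essentially the same route as the paper: the verification that $(B^{-1})^*d\BP_tf$ solves the $L^2$ Cauchy problem is carried over verbatim from Theorem~\ref{P6.2}, and uniqueness is obtained exactly as in the paper by the localized energy estimate built on \eqref{E5.5}, completing the square in $B^*\nabla F-\tfrac12\BB B^*F$, splitting the $\nabla\phi$ cross terms with two Cauchy--Schwarz parameters, and spending the surplus $\varepsilon N_B$ to keep the coercive term dominant before applying the cut-off sequence. Your identification of where the gap between $(1+\varepsilon)N_B$ and $N_B$ is consumed matches the paper's choice of the parameters $\lambda$ and $k$.
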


\begin{proof}
The core of the proof is still the uniqueness of the solution of the same $L^2$ Cauchy problem. We assume that  ${\left(B^* M_B(B^*)^{-1})\right)^{s} -(1+\varepsilon) N_B}$ is non-negative without any loss of generality. Let $F$ be a solution with the zero initial condition. For $\phi\in\CC_c^\infty$ and $\tau>0$, as in the proof of proposition \ref{P6.1}, we have
\begin{equation}
\int_0^\tau\int_M{\phi^2\langle F, (L^\sslash_B -(1+\varepsilon)(B^*)^{-1}N_BB^*) F\rangle_B\, d\mu\, dt}\geq \int_M{\phi^2\frac{1}{2}|F(\cdot,\tau)|^2_B\, d\mu}.
\end{equation}

On the other hand, according to the formula \eqref{E5.5}, we have
\begin{align*}
\int_M{\phi^2\langle F, L^\sslash_B F\rangle_B\, d\mu}
&= -\int_M{\langle \nabla(\phi^2 F), \nabla F\rangle_B\, d\mu} + \int_M{\langle B^*\nabla F, \BB(B^*\phi^2 F)\rangle\, d\mu}\\
&= -\int_M{\phi^2 |\nabla F|^2_B\, d\mu} + \int_M{\phi^2\langle B^*\nabla F, \BB(B^*\phi^2 F)\rangle\, d\mu}\\
&\qquad -2\int_M{\langle \nabla\phi\otimes F,\phi\nabla F\rangle_B\, d\mu}\\
&= -\int_M{\phi^2 |B^*\nabla F-\frac{1}{2}\BB B^*F|^2\, d\mu} + \int_M{\phi^2\langle  F, N_B F\rangle\, d\mu}\\
&\qquad -2\int_M{\langle \nabla\phi\otimes F,\phi(\nabla F-\frac{1}{2}(B^*)^{-1}\BB B^*F)\rangle_B\, d\mu}\\
&\qquad -2\int_M{\langle \nabla\phi\otimes F,\phi\frac{1}{2}(B^*)^{-1}\BB B^*F\rangle_B\, d\mu} .\\
\end{align*}

According to Cauchy-Schwarz inequality, for every $\lambda, k>0$, we have:
\begin{align*}
2|\langle\nabla\phi\otimes F, \phi(\nabla F-\frac{1}{2}(B^*)^{-1}\BB B^*F)\rangle_B| &\leq \lambda|\nabla\phi\otimes F|^2_B +\frac{1}{\lambda}\phi^2|B^*\nabla F-\frac{1}{2}\BB B^*F|^2\\
2|\langle\nabla\phi\otimes F, \phi\frac{1}{2}(B^*)^{-1}\BB B^*F\rangle_B| &\leq k|\nabla\phi\otimes F|^2_B +\frac{1}{k}\phi^2|\frac{1}{2}\BB B^*F|^2\\
\end{align*}

So, we have:
\begin{align*}
\int_M{\phi^2\langle F, L^\sslash_B F\rangle_B\, d\mu} \leq &\left(\frac{1}{\lambda}-1\right)\int_M{\phi^2 |B^*\nabla F-\frac{1}{2}\BB B^*F|^2\, d\mu}\\
 &+\left(1+\frac{1}{k}\right) \int_M{\phi^2\langle F, N_B F\rangle\, d\mu} + (\lambda+k)\int_M{|\nabla\phi|^2|F|^2_B\, d\mu} .\\
\end{align*}
Combining the above inequalities, we obtain that there exists a $c>0$ such that for every $\phi\in\CC_c^\infty(M)$, and every $\tau>0$
\begin{equation}
\frac{1}{2}\int_M{\phi^2|F(\cdot,\tau)|^2_B\, d\mu} \leq c\int_0^\tau\int_M{|\nabla\phi|^2|F|^2_B\, d\mu\, dt}.
\label{E7.14}
\end{equation}

Using a sequence of cut-off functions, we prove that $F=0$. The end of the proof follows the proof of Theorem \ref{P6.2} without any differences.
\end{proof}

Remark that under the condition of proposition \ref{P7.1}, $\tilde{\rho}_B$ is bounded from below, since $N_B$ is non-negative. But unlike in Theorem \ref{P6.1}, this proof requires a stronger condition. 

Back to our assumptions of ergodicity and probability measure, the intertwining relation of Proposition \ref{P7.1} implies the covariance's representation \eqref{E6.6}. This brings Brascamp-Lieb and Poincaré type inequalities.

\begin{theo}[Poincaré inequality]\label{P7.2}
Assume that ~${\left(B^* M_B(B^*)^{-1})\right)^{s} -(1+\varepsilon) N_B}$ is bounded from below for some $\varepsilon>0$ and that $\tilde{\rho}_B$ is positive. Then for every ~${f\in\CC^\infty_c(M)}$, we have :
\begin{equation*}
\Var_\mu(f)\leq \frac{1}{\tilde{\rho}_B} \int_M{| df|^2\, d\mu}.
\end{equation*}
\end{theo}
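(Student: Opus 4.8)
The plan is to mimic the proof of Theorem~\ref{P6.5}, using the variance representation that follows from the intertwining relation of Theorem~\ref{P7.1} rather than the symmetry-plus-positivity structure of Section~\ref{Section6}. First I would invoke Theorem~\ref{P7.1}: under the stated hypothesis the semi-groups are intertwined, so that $(B^*)^{-1}d\BP_t f = \BQ_t^B\bigl((B^*)^{-1}df\bigr)$, and exactly as in~\eqref{E6.6} one obtains the covariance (hence variance) representation
\begin{equation*}
\Var_\mu(f)=2\int_0^{\infty}\int_M\bigl\langle(B^*)^{-1}df,\BQ_{2t}^B((B^*)^{-1}df)\bigr\rangle_B\,d\mu\,dt ,
\end{equation*}
where the factor $2$ and the doubling of time come from the time-change and the self-adjointness of $\BQ^B$ on $L^2(B,\mu)$, just as in the alternative proof of Theorem~\ref{P6.5}. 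Then set $\psi(t)=\int_M|\BQ_t^B((B^*)^{-1}df)|_B^2\,d\mu$, so that $\Var_\mu(f)=2\int_0^\infty\psi(t)\,dt$ and $\psi(0)=\int_M|(B^*)^{-1}df|_B^2\,d\mu=\int_M|df|^2\,d\mu$.

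Next I would differentiate $\psi$ and feed in the key coercivity estimate~\eqref{E7.3}. Writing $\alpha_t=\BQ_t^B((B^*)^{-1}df)$ one has $\psi'(t)=2\int_M\langle\alpha_t,L^{W,B}\alpha_t\rangle_B\,d\mu$, and~\eqref{E7.3} gives
\begin{equation*}
\psi'(t)=-2\int_M\bigl\langle\alpha_t,(-L^{W,B})\alpha_t\bigr\rangle_B\,d\mu
\le -2\int_M\bigl\langle B^*\alpha_t,\bigl[(B^*M_B(B^*)^{-1})^{s}-N_B\bigr]B^*\alpha_t\bigr\rangle\,d\mu
\le -2\tilde\rho_B\,\psi(t),
\end{equation*}
the last inequality being the definition~\eqref{E7.4} of $\tilde\rho_B$ together with $|B^*\alpha_t|^2=|\alpha_t|_B^2$. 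By Grönwall's lemma $\psi(t)\le e^{-2\tilde\rho_B t}\psi(0)$, and since $\tilde\rho_B>0$ the function $\psi$ is integrable on $\R_+$; integrating yields $\Var_\mu(f)\le 2\psi(0)/(2\tilde\rho_B)=\tilde\rho_B^{-1}\int_M|df|^2\,d\mu$, which is the claim.

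The main obstacle is justifying the manipulations at the level of the $L^2$ semi-group: that $t\mapsto\psi(t)$ is differentiable with the stated derivative (this needs $\alpha_t$ to lie in the domain of $L^{W,B}$ for $t>0$, which follows from essential self-adjointness and the spectral theorem, as used already in Theorem~\ref{P6.2}), that the estimate~\eqref{E7.3}, a priori derived for smooth compactly supported $1$-forms, extends by density to $\alpha_t$, and that the variance representation is valid — i.e. that the time integral converges — which again uses ergodicity of $\BP$ and the exponential decay just obtained. These are the same technical points handled in Section~\ref{Section6}, so I would simply remark that the argument is parallel and refer back, rather than repeating the cut-off and spectral-theorem details. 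One should also note, as the paper does for $\rho_B=0$, that when $\tilde\rho_B=0$ the inequality is vacuous, so the hypothesis $\tilde\rho_B>0$ is the only substantive case.
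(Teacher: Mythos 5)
Your proposal is correct and follows essentially the same route as the paper: invoke the intertwining of Theorem~\ref{P7.1} to get the variance representation, set $\phi(t)=\int_M|\BQ_t^B((B^*)^{-1}df)|_B^2\,d\mu$, differentiate, apply the coercivity estimate~\eqref{E7.3} with the definition~\eqref{E7.4} of $\tilde\rho_B$, and conclude by Grönwall and integration in time. Your bookkeeping of the factor $2$ from the time change and self-adjointness is in fact slightly more careful than the paper's own write-up, and the technical caveats you list (domains, density, convergence of the time integral) are exactly those the paper also defers to Section~\ref{Section6}.
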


\begin{proof}
Let $f\in\CC^\infty_c(M)$ and $F_t = \BQ^B_t\left((B^*)^{-1}df\right)$. As in \eqref{E6.8}, we set 
\begin{equation}
\label{E7.6}
\phi(t)= \int_M \left|F_t\right|^2_B \,d\mu
\end{equation}
and we have the following representation of the variance 
\begin{equation}
\Var_\mu(f) = \int_0^{+\infty}\phi(t)\, dt.
\label{E7.7}
\end{equation}

We have:
\begin{align*}
\phi'(t) 
&= 2\int_M{\left\langle F_t, L^{W,B}F_t\right\rangle_B\, dt}\\
&\leq -2\int_M{\left\langle B^* F_t, \left[\left(B^* M_B(B^*)^{-1})\right)^{s} -N_B\right] B^*F_t\right\rangle\, d\mu}\\
&\leq -2\tilde{\rho}_B\phi(t)\\
\end{align*}

So we have
\begin{equation}
\phi(t)\leq e^{-2\tilde{\rho}_Bt}\int_M{| df|^2\, d\mu}.
\label{E7.8}
\end{equation}
Integrating on $\R_+$ gives the results.
\end{proof}

As for the Theorem \ref{P6.3}, the result still make sense when $\tilde{\rho}_B=0$. With the same kind of hypothesis, we can also prove a generalized Brascamp-Lieb inequality.

\begin{theo}[Generalized Brascamp-Lieb inequality]\label{P7.3}
Assume that the operator~$\left(B^* M_B(B^*)^{-1})\right)^{s} -(1+\varepsilon) N_B$ is bounded from below for some~${\varepsilon>0}$ and is positive definite for $\varepsilon=0$, then for every $f\in\CC^\infty_c(M)$ we have :
\begin{equation}
\Var_\mu(f)\leq \int_M{\left\langle df,\left[\left(B^* M_B(B^*)^{-1})\right)^{s} -N_B\right]^{-1} df\right\rangle\, d\mu}.
\end{equation}
\end{theo}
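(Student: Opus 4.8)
The plan is to mimic the proof of Theorem~\ref{P6.3}, replacing the algebraic lemma~\ref{P6.4} on the sum $-L^\sslash_B + M_B$ by the operator inequality \eqref{E7.3} obtained in the general (non-symmetric) case. First I would observe that under the hypothesis that $\left(B^* M_B(B^*)^{-1}\right)^{s} -(1+\varepsilon)N_B$ is bounded from below, Theorem~\ref{P7.1} applies, so the intertwining relation $(B^{-1})^* d\BP_t f = \BQ_t^B\left((B^{-1})^* df\right)$ holds, and hence the covariance representation \eqref{E6.6} is valid: for $f\in\CC^\infty_c(M)$ centered,
\begin{equation*}
\Var_\mu(f) = \int_0^{+\infty}\!\!\left(\int_M\left\langle (B^*)^{-1}df, \BQ_t^B\left((B^*)^{-1} df\right)\right\rangle_B\, d\mu\right)dt.
\end{equation*}

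Next, since $\left(B^* M_B(B^*)^{-1}\right)^{s} -N_B$ is positive definite — say bounded below by $c_0>0$ on compact parts, or at least strictly positive pointwise — inequality \eqref{E7.3} shows that $-L^{W,B}$ is bounded below by a positive operator in $L^2(B,\mu)$, so it is invertible on its range and $(-L^{W,B})^{-1}\alpha = \int_0^{+\infty}\BQ_t^B\alpha\,dt$ for compactly supported $\alpha$, exactly as in \eqref{E6.8}. Substituting $\alpha = (B^*)^{-1}df$ into the variance representation and pulling the time integral inside, I get
\begin{equation*}
\Var_\mu(f) = \int_M\left\langle (B^*)^{-1}df, (-L^{W,B})^{-1}\left((B^*)^{-1}df\right)\right\rangle_B\, d\mu.
\end{equation*}
The key point is then to bound $(-L^{W,B})^{-1}$ from above. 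From \eqref{E7.3}, for every $1$-form $\alpha$ one has $\langle \alpha,(-L^{W,B})\alpha\rangle_B \ge \langle B^*\alpha, (S_B-N_B)B^*\alpha\rangle$ where $S_B := \left(B^* M_B(B^*)^{-1}\right)^{s}$; writing $\beta = (-L^{W,B})^{-1}\alpha$ and applying this with $\alpha$ replaced by $\beta$, together with Cauchy--Schwarz, yields $\langle \alpha,(-L^{W,B})^{-1}\alpha\rangle_B \le \langle B^*\alpha,(S_B-N_B)^{-1}B^*\alpha\rangle$. Plugging $\alpha = (B^*)^{-1}df$ and using $\langle B^*\alpha,\cdot\rangle = \langle df,\cdot\rangle$ gives the claimed inequality.

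The main obstacle is the passage from the quadratic-form bound $-L^{W,B}\ge (B^{-1})^*(S_B-N_B)B^*$ (as quadratic forms on smooth compactly supported $1$-forms) to the reverse inequality for the \emph{inverses}, and the justification that the Poisson equation $-L^{W,B}\beta = \alpha$ has a solution with the required integrability when $S_B - N_B$ is only pointwise positive rather than uniformly so — exactly the difficulty already handled in Theorem~\ref{P6.3}. I would resolve it by the same regularization: replace $L^{W,B}$ by $\varepsilon\,\id - L^{W,B}$ for $\varepsilon>0$, which is invertible with integral representation $\int_0^{+\infty}e^{-\varepsilon t}\BQ_t^B\alpha\,dt$, carry out the computation of $\Var_\mu(f)$ through the resolvent $g_\varepsilon = (\varepsilon\,\id - L)^{-1}f$ as in the proof of Theorem~\ref{P6.3}, apply the form inequality to $\varepsilon\,\id - L^{W,B} \ge (B^{-1})^*(S_B - N_B)B^*$ to get $(\varepsilon\,\id-L^{W,B})^{-1} \le (B^{-1})^*(S_B-N_B)^{-1}B^*$, and then let $\varepsilon\to 0$, using ergodicity of $\BP$ and dominated convergence to kill the error term $\varepsilon\|f\|_{\L^2(\mu)}\|g_\varepsilon\|_{\L^2(\mu)}$. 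The operator-monotonicity step itself ($A \ge C > 0 \Rightarrow A^{-1}\le C^{-1}$) is standard and can be invoked directly, so no new genuinely hard estimate arises beyond what Section~\ref{Section7} already provides.
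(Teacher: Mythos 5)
Your proposal is correct and follows essentially the same route as the paper: the intertwining of Theorem~\ref{P7.1} gives the covariance representation \eqref{E6.6}, the coercivity estimate \eqref{E7.3} makes $-L^{W,B}$ invertible with the integral representation \eqref{E6.8}, the quadratic-form lower bound is converted into the upper bound for the inverse, and the case where $\left(B^* M_B(B^*)^{-1}\right)^{s}-N_B$ is only pointwise positive is handled by the same $\varepsilon$-resolvent regularization as in Theorem~\ref{P6.3}. The only cosmetic difference is that you invoke the standard Cauchy--Schwarz/operator-monotonicity argument directly where the paper cites its Lemma~\ref{P6.4}, which is the same algebraic fact.
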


\begin{proof}
First, let assume that $\tilde{\rho_B}$ is positive. Equation \eqref{E7.3} implies that for all~$1$-form~$\alpha$ we have:
\begin{equation}
\int_M{\langle (-L^{W,B})\alpha,\alpha\rangle_B\, d\mu}\geq \tilde{\rho_B}\int_M{|\alpha|^2_B\, d\mu}.
\label{E7.10}
\end{equation}

As in the proof of Theorem \ref{P6.3}, $-L^{W,B}$ is invertible with the same integral representation. So 
\begin{equation}
\Var_\mu(f)=\int_M \left\langle (B^*)^{-1}df, (-L^{W,B})^{-1}\left((B^*)^{-1}df\right)\right\rangle_B \,d\mu .
\label{E7.11}
\end{equation}

Furthermore, we have:
\begin{align*}
\int_M{\langle \alpha,(-L^{W,B})\alpha\rangle_B\, d\mu} 
&\geq \int_M\left\langle B^*\alpha, \left[\left(B^*M_B(B^*)^{-1}\right)^s-N_B\right] B^*\alpha\right\rangle\, d\mu\\
&\geq \int_M\left\langle \alpha, (B^*)^{-1}\left[\left(B^*M_B(B^*)^{-1}\right)^s-N_B\right]B^* \alpha\right\rangle_B\, d\mu.\\
\end{align*}

As $(B^*)^{-1}\left[\left(B^*M_B(B^*)^{-1}\right)^s-N_B\right]B^*$ is symmetric with respect to $\langle\cdot,\cdot\rangle_B$ and positive by assumption, we can use Lemma \ref{P6.1} to obtain
\begin{equation*}
\Var_\mu(f) \leq \int_M \left\langle (B^*)^{-1}df, (B^*)^{-1}\left[\left(B^*M_B(B^*)^{-1}\right)^s-N_B\right]^{-1}B^*\left((B^*)^{-1}df\right)\right\rangle_B \,d\mu .
\end{equation*}

Now, if $\tilde{\rho}_B=0$, we regularize as in the proof of theorem \ref{P6.3}. It ends the proof.
\end{proof}

We also obtain a bound for the spectral gap.

\begin{prop}\label{P7.4}
Assume that $\left(B^* M_B(B^*)^{-1})\right)^{s} -(1+\varepsilon) N_B$ is bounded from below for some $\varepsilon>0$ and that $\tilde{\rho}_B$ is positive. Then the spectral gap $\lambda_1(-L,\mu)$ satisfies:
\begin{equation}
\lambda_1(-L,\mu)\geq \tilde{\rho}_B.
\end{equation}
\end{prop}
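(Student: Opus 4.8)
The plan is to deduce Proposition~\ref{P7.4} from the Poincaré inequality of Theorem~\ref{P7.2} exactly as Proposition~\ref{P6.6} followed from Theorem~\ref{P6.5}. Recall that the spectral gap is, by definition, the largest constant $\lambda$ such that
\begin{equation*}
\lambda \Var_\mu(f) \leq \int_M |df|^2\, d\mu \quad \text{for all } f\in\CC_c^\infty(M),
\end{equation*}
the compactly supported smooth functions being a form core for the Dirichlet form associated with $-L$ on $L^2(\mu)$. Under the hypotheses of the proposition, Theorem~\ref{P7.2} applies and yields $\tilde\rho_B \Var_\mu(f) \leq \int_M |df|^2\, d\mu$ for every $f\in\CC_c^\infty(M)$. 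Comparing with the variational characterization of $\lambda_1(-L,\mu)$ gives immediately $\lambda_1(-L,\mu)\geq \tilde\rho_B$.

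Concretely I would proceed in three short steps. First, recall or cite (e.g.\ from \cite{Bak} or \cite{BE}) that since $L$ is symmetric on $\CC_c^\infty(M)$ with respect to $\mu$ and $M$ is complete, the Friedrichs extension of $-L$ has purely the Dirichlet form $f\mapsto \int_M|df|^2\, d\mu$ with $\CC_c^\infty(M)$ a core, so that
\begin{equation*}
\lambda_1(-L,\mu) = \inf\left\{\frac{\int_M |df|^2\, d\mu}{\Var_\mu(f)} : f\in\CC_c^\infty(M),\ \Var_\mu(f)\neq 0\right\}.
\end{equation*}
Second, invoke Theorem~\ref{P7.2}, whose hypotheses coincide with those of the present proposition, to get $\Var_\mu(f)\leq \tilde\rho_B^{-1}\int_M|df|^2\, d\mu$ for all such $f$ (this is where $\tilde\rho_B>0$ is used, so the inversion makes sense). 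Third, rearrange and take the infimum over $f$ to conclude $\lambda_1(-L,\mu)\geq\tilde\rho_B$.

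There is essentially no obstacle here: the proposition is a direct corollary of Theorem~\ref{P7.2} together with the standard variational description of the spectral gap. The only point deserving a line of care is the identification of the inequality $\Var_\mu(f)\leq \tilde\rho_B^{-1}\int_M|df|^2\,d\mu$, valid a priori only on $\CC_c^\infty(M)$, with a genuine lower bound on the bottom of the spectrum of $-L$ on the orthogonal complement of the constants; this is handled by the density of $\CC_c^\infty(M)$ in the form domain, which holds by completeness of $M$ (the same cut-off argument already used in the proof of Proposition~\ref{P6.1}). One may also remark, as in the text following Proposition~\ref{P6.6}, that this spectral gap translates into an exponential rate $e^{-\tilde\rho_B t}$ of $L^2(\mu)$-convergence to equilibrium for the ergodic semi-group $\BP$.
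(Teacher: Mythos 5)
Your proposal is correct and matches the paper's (implicit) reasoning: the paper states Proposition~\ref{P7.4} without a separate proof, treating it, exactly like Proposition~\ref{P6.6}, as the spectral-gap reformulation of the corresponding Poincaré inequality, here Theorem~\ref{P7.2}, via the standard variational characterization of $\lambda_1(-L,\mu)$ over $\CC_c^\infty(M)$. Your added remark on $\CC_c^\infty(M)$ being a form core by completeness of $M$ is a sensible precision but does not change the argument.
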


Remark that if the hypothesis of Proposition \ref{P5.1} are satisfied, then $N_B=0$ and~$B^* M_B(B^*)^{-1}$ is symmetric and so $\rho_B= \tilde{\rho}_B$. In particular, Theorem \ref{P7.2} (respectively \ref{P7.3} and \ref{P7.4}) can be applied to small perturbations of generators satisfying the conditions of Theorem \ref{P6.5} (or \ref{P6.3} and \ref{P6.6}) and the bounds obtained are stable with respect to perturbations.

%%%%%%%%%%%%%%%%%%%%%%%%%%%%%%%%%%%%%%%%%%%%%%%%%%%%%%%%%%%%%%%%%%%%%%%%%%%%
\section{Examples in radially symmetric surfaces}\label{Section8}
\setcounter{equation}0
%%%%%%%%%%%%%%%%%%%%%%%%%%%%%%%%%%%%%%%%%%%%%%%%%%%%%%%%%%%%%%%%%%%%%%%%%%%%
In this section, we illustrate our results with three examples. Each one corresponds to a different case where Bakry-\'Emery criterion is not satisfied : strictly convex in each point but $\rho=0$, strictly concave in a compact region and a not upper bounded $\rho$, and strictly concave in a compact region and $\rho$ upper bounded. We also give heuristic ideas to find adequate twists. We even improve a lower bound in a classical example. The difficulty is to find a concession between interesting examples (manifold and measure) and calculability. In our examples, the measure~$\mu$ will be classical but the manifold will be from casual (as hyperbolic plan) to quite exotic. Our manifold $M$ will be a two dimensional radially symmetric space with global polar chart ~${(r,\theta)\in \R_+\times \S^1}$, such that, in this coordinates, the metric is given by : 
\begin{equation}\label{E8.2}
ds^2 = dr^2 + f(r)^2d\theta^2
\end{equation}
where $f : \R_+\to\R_+$ is a smooth function satisfying $f(r)=0$ if and only if~${r=0}$ and~$f'(0)=1$. This model includes the surfaces of constant curvature : hyperbolic plan  $\H^2$ with $f(r) = \sinh(r)$ or Euclidean plan $\R^2$ with $f(r) = r$. It also includes surface of revolution. The Riemanian volume measure of such a manifold is : $\vol(drd\theta) = f(r)drd\theta$. For every smooth function ~$\phi$, in the orthonormal basis~$(\partial_r, \frac{1}{f(r)}\partial_\theta)$, we have :
\begin{equation}
\nabla\phi  =\left(\begin{aligned}&\partial_r\phi\\ &\frac{1}{f}\partial_\theta\phi\\\end{aligned}\right),
\end{equation}
\begin{equation}
\nabla^2\phi  =\left(\begin{array}{cr}\partial^2_r\phi & \frac{1}{f}\partial^2_{r,\theta}\phi -\frac{f'}{f^2}\partial_\theta\phi \\\frac{1}{f}\partial^2_{r,\theta}\phi -\frac{f'}{f^2}\partial_\theta\phi &\frac{1}{f^2}\partial^2_\theta\phi + \frac{f'}{f}\partial_r\phi\\\end{array}\right),
\end{equation}
\begin{equation}
\Ric = -\frac{f''}{f}\id.
\end{equation}
For more details on radially symmetric manifold see \cite{Hsu} or \cite{Rou}. In all our examples, twists have the form of Proposition \ref{P5.2} : ${B(x) = b(x) \id_{T_xM}}$ with $b = \exp(U)$ a radial positive function. With this special form of twist, we have :
\begin{equation}
(B^*)^{-1}L^\sslash(B^*) = b^{-1}L(b) \id_{T_xM},
\end{equation}
and
\begin{equation}\label{E8.1}
b^{-1}L(b) = \Delta U -\langle \nabla V, \nabla U\rangle +|\nabla U|^2. 
\end{equation}
As in the Euclidean case, a usual choice of twist is $U=\varepsilon V$ but we will also see some cases where it is not enough to obtain a positive $\rho_B$. As metrics, measures and twists are radial, we will only use the variable $r$ with a slight abuse of notation.

Our first example is the case of generalized Cauchy measures on $\R^2$. It have been studied in \cite{Ngu} for weighted Poincaré inequalities and in \cite{BJM} for bounds on the spectral gap, both in any dimension $n\geq2$. We show that our method can improve the previous lower bounds for $n=2$. This example also illustrate how using Riemannian geometry can help for measures in an Euclidean space. On $\R^2$, we define the function $\sigma^2 (x)= 1+|x|^2$. For $\beta>1$, we define the differential operator ~$L_\beta$ by :
\begin{equation}
L_\beta f(x) = \sigma^2(x)\Delta_E f(x) -2(\beta-1)x.\nabla_Ef(x),\, \forall f\in\CC^\infty(\R^2),\, \forall x\in\R^2,
\end{equation}
where $\Delta_E$ and $\nabla_E$ stand for the Euclidean Laplacian and gradient. This operator admits a unique invariant probability $\mu_\beta$ whose density with respect to the Lebesgue measure is proportional to $(\sigma^2)^{-\beta}$. Remark that for $\beta\leq1$, it does not define a finite measure any more. The form of the generator $L_\beta$ suggests that the Euclidean geometry is not adapted to the problem. The relevant space is the manifold $M$ which have a global Cartesian chart $\R^2$ and whose metric is given by 
$$ds^2 = \frac{dx_1^2 +dx_2^2}{\sigma^2}.$$
In order to obtain an expression of the metric as in \eqref{E8.2}, we use the appropriate generalized polar coordinates :
\begin{equation}
(x_1,x_2) = \left(\sinh(r)\cos(\theta), \sinh(r)\sin(\theta)\right),\, (r,\theta)\in \R_+^*\times \S^1.
\end{equation}
In the chart $(r,\theta)$, the metric has the desired form, with $f = \tanh$. Then, we need to find the density of $\mu_\beta$ with respect to the Riemannian volume. We have :
\begin{align*}
d\mu_\beta 
&= Z(\sigma^2)^{-\beta}dxdy\\
&= Z\cosh^{-2\beta} \sinh\cosh dr d\theta\\
&= Z\cosh^{-2(\beta-1)}\tanh dr d\theta\\
&= Z\cosh^{-2(\beta-1)}\vol(drd\theta),\\
\end{align*}
with $Z$ the normalization constant. Then, if we set $V(r) = 2(\beta-1)\ln(\cosh(r))$ for $(r,\theta)\in \R_+$, the generator $L_\beta$ has the expression \eqref{E2.1} and we can apply our method. Firstly, the operator $\MM$ is an homothetic transformation :
\begin{equation}
\MM = \frac{2\beta}{\sigma^2} \id.
\end{equation} 
For each $x\in M$, $\MM$ is strictly convex but globally, $\MM$ is only convex : $\rho =0$. We try a twist of the shape $\exp(\varepsilon V)$. Using the formula \eqref{E8.1}, for all $r\geq0$, we have~:
\begin{equation}
\rho_B(r) = 2\beta -4\varepsilon(\beta-1) + [4\varepsilon(1-\varepsilon)(\beta-1)^2-(2\beta -4\varepsilon(\beta-1))]\tanh^2(r).
\end{equation}
The function $\rho_B$ is monotonous and can be bounded from below by the minimum between its value in $r=0$ and its limit as $r\to+\infty$ : 
\begin{equation}
\rho_B \geq \min\left\{ 2\beta -4\varepsilon(\beta-1), 4\varepsilon(1-\varepsilon)\right\}.
\end{equation}
The optimal parameter is :
\begin{equation}
\varepsilon_0=\left\{ \begin{array}{lcc} 
\frac{1}{2} & \text{if} &1<\beta\leq 1+\sqrt{2}\\ 
\frac{\beta+\sqrt{(\beta-1)^2-1}}{2(\beta-1)} & \text{if} &1+\sqrt{2}\leq\beta\\ 
\end{array}\right.
\end{equation}.

\begin{corol}\label{P8.1}
The spectral gap of the operator $L_\beta$ is bounded from below by :
\begin{equation}
\lambda_1(L_\beta) \geq \left\{ \begin{array}{lcc} 
(\beta-1)^2 & \text{if} &1<\beta\leq 1+\sqrt{2}\\ 
2\sqrt{(\beta-1)^2-1} & \text{if} &1+\sqrt{2}\leq\beta\\ 
\end{array}\right. .
\end{equation}
\end{corol}
Back to $\R^n$, this spectral gap is interpreted as weighted Poincaré inequality :
\begin{equation*}
\Var_\mu(f)\leq \frac{1}{\rho_B}\int_M|\nabla_Ef|^2\sigma^2d\mu_\beta,\; \forall f\in\CC_0^\infty(M).
\end{equation*} 
Remark that for $\beta\geq 1+\sqrt{2}$, the optimal $\varepsilon_0$ corresponds to the case where $\rho_B$ is a constant function. The best lower bound known for this spectral gap, in \cite{BJM}, are :
\begin{equation*}
\lambda_1(L_\beta) \geq \left\{ \begin{array}{lcc} 
(\beta-1)^2 & \text{if} &1<\beta\leq \frac{3+\sqrt{5}}{2}\\ 
\beta & \text{if} &\frac{3+\sqrt{5}}{2}\leq\beta\\ 
\end{array}\right. .
\end{equation*} 
So our method improves the result for $\beta\geq\frac{3+\sqrt{5}}{2}$. Actually, \cite{BJM} also gives upper bounds : 
\begin{equation*}
\lambda_1(L_\beta) \leq \left\{ \begin{array}{lcc} 
(\beta-1)^2 & \text{if} &1<\beta\leq \frac{3+\sqrt{5}}{2}\\ 
2(\beta-1) & \text{if} &\frac{3+\sqrt{5}}{2}\leq\beta\\ 
\end{array}\right. ,
\end{equation*}
and for $\beta\geq 3$, it is proved in \cite{Ngu} that $\lambda_1(L_\beta) = 2(\beta-1)$. This shows that our lower bound is optimal for $\beta\leq 1+\sqrt{2}$ and has the good asymptotic for $\beta\to+\infty$, even if our choice of twist, a priori, cannot pretend to be optimal.

Our second example is the case of exponential power measures on the hyperbolic plan. We set $M=\H^2$, $f=\sinh$ and for $\alpha>2$,
\[V(r)= \frac{r^\alpha}{\alpha},\,\forall r\in\R_+.\]
Remark that the measure associated to $V$ is finite for $\alpha>1$. The generator associated to this measure is 
\begin{equation}
L_\alpha = \partial_r^2 + \frac{1}{\tanh(r)}\partial_r +\frac{1}{\sinh^2(r)}\partial^2_\theta-r^{\alpha-1}\partial_r.
\end{equation}
Using a result from \cite{Wang}, these measures satisfy a Log-Sobolev inequality for $\alpha \geq2$. The limit case $\alpha=2$, corresponds to the radial hyperbolic Ornstein-Uhlenbeck process. We will discuss at the end why it must be excluded by our method. The  smallest eigenvalue of the potential $\MM$ is :
\begin{equation}
\rho_\alpha(r) = \min\left\{(\alpha-1)r^{\alpha-2}, \frac{r^{\alpha-1}}{\tanh(r)}\right\} -1,\, \forall r\in\R_+,
\end{equation}
so its infimum is $\rho = -1$. Then, we know that the semigroups ~$\BP$ and ~$\BQ$ are intertwined but as $\rho$ is not positive, we cannot directly use it in terms of functional inequalities. It is a case where twisting is needed. In these cases, the operator $\MM$ is concave in a neighbourhood of the origin and strictly convex outside . We need a choice of $b$ which counter the negativity of $\Ric$ around ~${r=0}$. A direct calculation show that $U=\varepsilon V$ cannot achieve this goal. We propose the following function :
$$U_{\varepsilon, \eta}(r) =  \frac{1}{2}V(r) -\varepsilon\frac{r^2}{2} + \eta \ln(\cosh(r)),\, \forall r\in\R_+,$$
with $\varepsilon$ and $\eta$ parameters which should be fitted. Let us explain this choice. In the expression \eqref{E8.1} there is the beginning of the square $|\nabla U-\nabla V/2|^2$ which explains the term $V/2$. The second term is inspired by the one-dimensional case in  \cite{BJ}. Its Laplacian should help in $r=0$ because it will not vanish there. The third term is directly linked to the metric : it is a primitive of $f/f'$. This makes appear the Ricci curvature in the expression of $b^{-1}L(b)$. With this choice of $U$, for all $r\in\R_+$, we have :
\begin{align*}
b^{-1}L_\alpha(b)(r) 
&=  2\eta-\varepsilon +\frac{\alpha-1}{2}r^{\alpha-2}+\varepsilon^2r^2-\frac{r^{2\alpha-2}}{4} +\frac{r^{\alpha-1}-2\varepsilon r}{2\tanh(r)}\\ 
&\quad -2\varepsilon\eta r\tanh(r) -\eta(1-\eta)\tanh^2(r).\\
\end{align*}
Now, we need to find whether there exists coefficient $\varepsilon$ and $\eta$ such that $\rho_B$ is positive and for which coefficient it is optimal. Unfortunately, it seems difficult to give explicit bounds in all generality. Nevertheless, numerically, we find the following bounds :
$$\begin{array}{|c||c|c|c|c|c|}
\hline
\alpha & 2.01 & 3 & 3.5 & 4 & 4.5\\
\hline
(\varepsilon,\eta) & (0.5, -0.007)  & (1.59, 0.8) & (1.83, 1.15) & (2.45, 1.84) & (2.78, 2.279)\\
\hline
\lambda_1(L_\alpha)\geq\cdot & 6.10^{-4} & 0.47 & 0.34 & 0.21 & 2.10^{-3}\\
\hline
\end{array} $$
Remark that the method developed in \cite{BJM} needs $\alpha >3$ to obtain results. These bounds do not pretend to be optimal. Perhaps another kind of $b$ could have brought better results, especially for $\alpha = 4.5$, or $\alpha = 5$ for which we did not find good parameters. It could be interesting to bring some upper-bound on the spectral gap, by a testing on examples or by other means, to discuss the relevancy of our lower-bound. Concerning the decay of our bounds for $\alpha$ near $2$, it was expected. Indeed, for $\alpha = 2$, $\rho_2$ is a constant function, equal to zero. But as explained in \cite{ABJ}, an integration by parts shows that 
\begin{equation}
\int_M-b^{-1}L(b)\, d\mu = -\int_M|db|^2\, d\mu.
\end{equation}
If $b$ is not constant, it will be negative in a region and so $\rho_B$. Twisting with a function absolutely needs a region where $\MM$ is strictly convex. It is not the case of the hyperbolic Ornstein-Uhlenbeck generator. It could be interesting to look at more complex $B$, in the way of Section \ref{Section7}.

In our last, example, we have a similar situation : bounded region of strict concavity and strict convexity elsewhere but with another constraint : $\rho$ is upper bounded. Let $M$ be the revolution surface generated by the rotation around the ordinates axis of the curve
\begin{equation}
y=\frac{1}{\sqrt{1+x^2}},\, x\in\R_+. 
\end{equation}

It is a regularized version of the surface generated by $y=1/x$. In an adapted polar chart, its metric has the form \eqref{E8.2} but we don't have any explicit formula for $f$. However, we can find sufficiently sharp estimates of it. For a surface in $\R^3$ parametrized as 
\begin{equation}
S=\left\{ (f(r) \cos(\theta), f(r)\sin(\theta), g(r) ):(r,\theta)\in\R_+\times \S^1 \right\},
\end{equation}
the metric has the form : $ds^2 = (f'^2+g'^2)dr^2 +f^2d\theta^2$. Using the relation between~$f$ and~$g$ given by the generating curve, we can prove that $f$ satisfies the equation :
$$f' = \frac{1}{\sqrt{1+\frac{f^2}{(1+f^2)^3}}}.$$
We obtain the following bounds : for all $r\in\R_+$,
\begin{align*}
\alpha r &\leq f(r) \leq r\\
\frac{1}{\sqrt{1+\frac{r^2}{(1+\alpha^2 r^2)^3}}} &\leq f'(r) \leq \frac{1}{\sqrt{1+\frac{\alpha^2r^2}{(1+ r^2)^3}}}\\,
\end{align*}
with $\alpha = \sqrt{\frac{27}{31}}$. The Ricci curvature has an explicit formula in function of $f$ : 
\begin{equation}
\Ric = \frac{(1-2f^2)(1+f^2)^2}{(f^2+(1+f^2)^3)^2}.
\end{equation}
This formula gives us lower and upper bounds on $\Ric$. In particular, we know that~$\Ric(0)=1$, then it decreases to a negative minimum (which is numerically in the range ~${-0.050<\min\Ric <-0.049}$) and then it increases and vanishes. We are interested in radial Gaussian measures on this manifold : for $\gamma>0$,
$$V_\gamma(r) = \frac{\gamma r^2}{2},\, \forall r\in\R_+.$$
The smallest eigenvalue of $\MM$ is :
$$\rho (r) = \gamma\min\left\{1, r\frac{f'(r)}{f(r)} \right\} + \Ric(r),\, \forall r\geq0.$$
If $\alpha\gamma$ is bigger to $-\min\Ric$, $\rho$ will be positive. So we are mainly interested in the case of small $\gamma$ such that twistings are unavoidable, but also in cases of "big" $\gamma$ as we shall see. Thanks to the Ricci curvature, we know that $\rho(r)$ is positive for small $r$ and tends to $\gamma>0$ as $r$ tends to $+\infty$. We need to compensate a compact region of negativity. Here, we use the radial function $U_{\varepsilon,\omega,k}$ defined by
\begin{equation}
U_{\varepsilon,\omega,k}(r) = \int_0^r \varepsilon \sin(\omega t) e^{-k t}\, dt,\, \forall r\in\R_+,
\end{equation} 
where $\varepsilon$, $\omega$ and $k$ are parameters. This goal of this quite unusual function is to give to $b^{-1}L(b)$ the shape of $\Ric$. The decreasing exponential term is explained by the vanishing of $\Ric$, it is linked to the boundedness of $\rho$. The goal of the sinusoidal term is to create a peak, compensating Ricci's minimum. The exponential has to damp the following peaks. According to equation \eqref{E8.1}, for all $0\leq r$, we have :
\begin{equation}
b^{-1}L(b) (r) = \varepsilon\left[\omega\cos(\omega r) + \sin(\omega r)\left(-k + \frac{f'(r)}{f(r)} -\gamma r + \varepsilon \sin(\omega r) e^{-k r}\right)\right]e^{-k r}.
\end{equation}

Again, we can obtain numerical lower bound of the spectral gap. With the parameters $\varepsilon =0.217$, $\omega = 2.022$, $k = 1.7$, we have :

$$\begin{array}{|c||c|c|c|c|c|}
\hline
\gamma &0.01 & 0.02 & 0.03 & 0.04 & 0.06 \\
\hline
\lambda_1(L_\gamma)\geq\cdot & 7.10^{-3} & 0.01 & 0.02 & 0.03 & 0.05 \\
\hline
\end{array} $$
As expected, this bound are smaller than $\gamma$, the Euclidean bound, although they do not seem very sharp, in particular, for very small $\gamma$. It could be linked to the choice of twist $b$. When $\gamma$ become smaller, the region where $\MM$ is strictly concavity become larger. This explains why our choice of twist is less adapted for small $\gamma$. To finish, this example shows that twisting is not the last resort method of spectral gap research and can also be interesting even if the Backy-\'Emery criterion is satisfied. Looking at $\alpha\gamma$ slightly bigger than $-\min\Ric$, the upper bound $\rho \leq \gamma+\min\Ric$ of Bakry-\'Emery is very small but as shown in the array above, we still obtain reasonable bounds ($\gamma = 0.06$). For $\gamma =1$, where we have more room for our parameters, we can obtain a lower bound $\lambda_1(L)\geq 0.98$ (here, we use $k = 2.5$) while $\rho$ is in the range $0.95<\rho<0.951$.
\bibliographystyle{plain}
\bibliography{biblio}
\end{document}